\DeclareMathAlphabet{\mathsl}{OT1}{cmr}{m}{sl}
\numberwithin{equation}{section}
\newcommand{\Z}{\mathbb{Z}}
\newcommand{\C}{\mathbb{C}}
\newcommand{\grF}{\mathsf{F}}
\newcommand{\grE}{\mathsf{E}}
\newcommand{\grB}{\mathsf{B}}
\newcommand{\grU}{\mathsf{U}}
\newcommand{\grZ}{\mathsf{Z}}
\newcommand{\grM}{\mathsf{M}}
\newcommand{\id}{\operatorname{\mathsl{id}}}
\renewcommand{\max}{\operatornamewithlimits{\mathsl{max}}}
\renewcommand{\deg}{\operatorname{\mathsl{deg}}}
\renewcommand{\dim}{\operatorname{\mathsl{dim}}}
\renewcommand{\ker}{\operatorname{\mathsl{ker}}}
\newcommand{\ann}{\operatorname{\mathsl{ann}}}
\newcommand{\cor}{\operatorname{\mathsl{cor}}}
\newcommand{\grC}{\mathsf{C}}
\renewcommand{\int}{\operatorname{\mathsl{int}}}
\DeclareMathOperator{\gr}{\operatorname{\mathsf{gr}}}
\DeclareMathOperator{\Nrd}{\operatorname{\mathsl{Nrd}}}
\DeclareMathOperator{\DNrd}{\operatorname{\mathsl{DNrd}}}
\DeclareMathOperator{\ind}{\operatorname{\mathsl{ind}}}
\DeclareMathOperator{\Aut}{\operatorname{\mathsl{Aut}}}
\DeclareMathOperator{\im}{\operatorname{\mathsl{im}}}
\DeclareMathOperator{\Gal}{\operatorname{\mathcal{G}}}
\DeclareMathOperator{\Br}{\operatorname{\mathsl{Br}}}
\DeclareMathOperator{\lcm}{\operatorname{\mathsl{lcm}}}
\DeclareMathOperator{\charac}{\operatorname{\mathsl{char}}}
\def\hsp{\mspace{1mu}}
\newcommand{\DIM}[2]{[#1{\hsp:\hsp}#2]}
\newcommand{\IND}[2]{\lvert#1{\hsp:\hsp}#2\rvert}
\def\tsum{\textstyle\sum\limits}
\newcommand{\jdotfont}{}
\font\jdotfont lcircle10  scaled 913 
\newcommand{\osbullet}{\jdotfont\char113} 
\newlength{\sbwd} \settowidth{\sbwd}{\osbullet}
\newcommand{\csbullet}{\kern.5\sbwd\osbullet\kern-.5\sbwd}
\def\SK{\mathrm{SK}_1}
\def\ga{\gamma}
\def\Ga{\Gamma}
\def\inv{^{-1}}
\def\Div{\operatorname{\mathsl{Div}}}
\def\DT{\Div(T)}
\def\tDT{\widetilde{\DT}}
\def\DTt{\Div(T)^\tau}
\def\SSS{\mathcal S}
\def\dd{\delta}
\def\OM{\Omega}
\def\modd{\operatorname{\mathsl{mod}}}
\def\tNG{\widetilde{N_G}}
\def\OO{\mathcal O}
\def\VV{\mathcal V}
\newcommand{\St}[1]{\Sigma_\tau(#1)}
\newcommand{\Stp}[1]{\Sigma_\tau'(#1)}
\def\wi{\widetilde}
\newtheorem{proposition}{Proposition}[section]
\newtheorem{theorem}[proposition]{Theorem}
\newtheorem{corollary}[proposition]{Corollary}
\newtheorem{lemma}[proposition]{Lemma}
\theoremstyle{definition}
\theoremstyle{remark}
\newtheorem*{remark}{Remark}
\newtheorem{example}[proposition]{Example}
\begin{document}
\title{Unitary $SK_1$ for a graded division ring and its quotient division ring}
\author{A. R. Wadsworth \and V. I. Yanchevski\u\i}
\address{Department of Mathematics\\
University of California, San Diego\\
9500 Gilman Drive \\
La Jolla, CA  92093-0112\\
USA}
\email{arwadsworth@ucsd.edu}
\address{Institute of Mathematics of the National Academy of Sciences of Belarus\\
Ul. Surganova  11\\
220072 Minsk\\
Belarus}
\email{yanch@im.bas-net.by}

\maketitle
\begin{abstract}
Let $\grE$ be a graded division ring finite-dimensional over its center
with torsion-free abelian grade group, and let $q(\grE)$ be its 
quotient division ring.  Let $\tau $ be a degree-preserving unitary involution
on $\grE$.  We prove that $\SK( \grE, \tau) \cong \SK( q(\grE), \tau)$.
\end{abstract}
\section{Introduction}

Let $D$ be a division algebra finite-dimensional over its center $K$. Let $\tau$
be a unitary involution on $D$, i.e., $\tau$ is an antiautomorphism of $D$
with $\tau^2= \id$ such that $\tau|_K \ne \id$.  By definition,
$$
\SK(D, \tau) \, = \, \Sigma_\tau'(D) \big/ \Sigma_\tau(D),
$$
where
$$
\Sigma_\tau(D) \, = \big\langle\{a\in D^*\mid   a = \tau(a) \}\big\rangle
\qquad \text{and} \qquad 
\Sigma_\tau'(D) \, = \,\{ a\in D^* \mid \Nrd_D(a) = \tau(\Nrd_D(a))\}.
$$

Analogously, suppose $\grE = \bigoplus_{\gamma\in \Ga_\grE}\grE_\ga$
 is a graded division algebra (i.e., a graded ring in which all nonzero 
homogeneous elements are units) of finite rank over its center $\grZ$, and with 
torsion-free abelian grade group $\Ga_\grE$.  Suppose $\grE$ has a unitary graded 
involution $\tau$, which is a ring antiautomorphism with $\tau(\grE_\ga) = \grE_\ga$
for each $\ga$,  such that $\tau^2 = \id$ and $\tau|_\grZ \ne \id$.  
Then there is a reduced norm map $\Nrd_\grE\colon \grE \to \grZ$, and one can 
define $\St\grE$, $\Stp \grE$, and $\SK(\grE, \tau)$ just as we did above for 
$D$.  Any such $\grE$ has a ring of (central) quotients, 
$q(\grE) = \grE \otimes _\grZ q(\grZ)$, where $q(\grZ)$ is the quotient
field of the integral domain $\grZ$, and it is known that 
$q(\grE)$ is a division ring finite-dimensional over its center, which 
is $q(\grZ)$. The unitary graded involution $\tau$ on $\grE$ extends canonically 
to a unitary involution on $q(\grE)$, also called $\tau$.  In this paper
we will prove the following:

\begin{theorem}\label{th:main}
Let $\grE$ be a graded division algebra $($with torsion-free abelian grade group$)$ 
finite-dimensional over its center, and let $\tau$ be a unitary graded  
involution on $\grE$.
Let $q(\grE)$ be the quotient division ring of~$\grE$ and let $\tau$ denote
also  
the extension of $\tau$ to a unitary involution of $q(\grE)$.  Then,
$$
\SK(\grE,\tau) \,\cong\, \SK(q(\grE), \tau).
$$  
\end{theorem}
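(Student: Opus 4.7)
The plan is to construct a two-sided inverse for the inclusion-induced map $\SK(\grE,\tau) \to \SK(q(\grE),\tau)$, using the valuation on $q(\grE)$ that arises from the grading on $\grE$. Fix a total order on the torsion-free abelian group $\Ga_\grE$; this produces a valuation $v$ on $q(\grE)$ whose associated graded ring is canonically $\grE$. Because $\grE$ is a domain (a graded division ring with an orderable grade group), the leading-term map $\pi\colon q(\grE)^* \to \grE^*$, $x \mapsto \tilde x \in \grE_{v(x)}$, is a well-defined group homomorphism that retracts the inclusion $\iota\colon \grE^* \hookrightarrow q(\grE)^*$. Since $\tau$ is degree-preserving on $\grE$, it extends to $q(\grE)$ preserving $v$ and commutes with $\pi$; moreover, the standard compatibility $\widetilde{\Nrd_{q(\grE)}(x)} = \Nrd_\grE(\tilde x)$ holds for the reduced norms. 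It follows that $\iota$ maps $\St\grE$ into $\St{q(\grE)}$ and $\Stp\grE$ into $\Stp{q(\grE)}$, while $\pi$ sends $\St{q(\grE)}$ into $\St\grE$ and $\Stp{q(\grE)}$ into $\Stp\grE$; both therefore descend to group homomorphisms between the $\SK$ groups, and $\pi\circ\iota = \id_{\grE^*}$ makes $\SK(\grE,\tau) \hookrightarrow \SK(q(\grE),\tau)$ a split injection.

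For surjectivity, I would decompose any $x \in \Stp{q(\grE)}$ as $x = \pi(x)\cdot u$, where $\pi(x) \in \Stp\grE \subset \grE^*$ lies in the image of $\iota$ and $u := \pi(x)\inv x$ is a principal unit, i.e., $v(u)=0$ and $\tilde u = 1$. The reduced norm $\Nrd(u) = \Nrd(x)/\Nrd(\pi(x))$ is the ratio of two $\tau$-symmetric elements of $q(\grZ)$ and is therefore itself $\tau$-symmetric. Surjectivity of the inclusion map on $\SK$ thus reduces to the assertion that every principal unit $u \in 1 + \mathfrak{m}_v \subset q(\grE)^*$ with $\tau$-symmetric reduced norm belongs to $\St{q(\grE)}$.

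This last reduction is the main obstacle and is where I expect the bulk of the technical work. I would attack it by successive approximation along the $v$-filtration: given $u \in 1 + \mathfrak{m}_v^k$ with $\Nrd(u)$ $\tau$-symmetric, construct a $\tau$-symmetric $s \in 1 + \mathfrak{m}_v^k$ such that $us\inv \in 1 + \mathfrak{m}_v^{k+1}$. The existence of $s$ at each level should reduce to solving an additive, trace-type equation on the degree-$k$ piece of the associated graded; this is tractable precisely because $\tau$ is unitary (so traces from $\grZ$ onto its $\tau$-fixed subring are surjective on the relevant pieces) and $\Nrd(u)$ is $\tau$-symmetric (making the obstruction vanish). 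Since $q(\grE)$ is not $v$-complete, infinite iteration does not converge automatically; instead, for a given $u$ only finitely many grades of $\grE$ appear in a common expression for $u$, so finitely many iterations drive $u$ into $\grE^*$, at which point $u$ is a homogeneous unit of $\grE$ and its class in $\SK(q(\grE),\tau)$ visibly lies in the image of the inclusion.
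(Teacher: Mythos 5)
Your first half (injectivity) is sound and matches the paper: the leading-term retraction $\pi\colon q(\grE)^*\to\grE^*$ coming from a total ordering of $\Ga_\grE$ is exactly the map $\lambda$ the paper uses to show $\St{q(\grE)}\cap\grE^*\subseteq\St\grE$, and your observation that it splits the map on $\SK$ is fine (modulo citing the leading-term compatibility of reduced norms from \cite{hazw1}). The gap is in the surjectivity half, and it is a genuine one. Your reduction is to the statement that every $u\in 1+\mathfrak{m}_v$ with $\tau$-symmetric reduced norm lies in $\Sigma_\tau(q(\grE))$ --- a Congruence-Theorem--type assertion. Your proposed proof by successive approximation along the $v$-filtration cannot be completed as described. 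The termination argument is false: an element of $q(\grE)$ is a quotient $az^{-1}$ with $a\in\grE$, $z\in Z(\grE)$, not a finite sum of homogeneous components (e.g.\ $(1-x)^{-1}$ already fails to lie in $\grE$), so after each correction $us^{-1}$ is again such a quotient and there is no finite set of ``grades appearing in $u$'' that gets exhausted; the iteration produces elements ever closer to $1$ in the valuation topology without ever landing in $\grE^*$. Since $q(\grZ)$ is not Henselian (it is a rational-function-type field), one cannot pass to a limit either, and the Congruence Theorem --- which is a deep result even in the Henselian tame case, cited in the paper as \cite[Th.~3.5]{hazw2} --- is simply not available here. The single-step solvability of your ``trace-type equation'' is also unjustified in the case $\grZ_0=\grF_0$, $\IND{\Ga_\grZ}{\Ga_\grF}=2$, where $\tau$ acts trivially on all the degree-zero data.

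What your sketch omits is precisely the content of the paper's proof. The paper establishes surjectivity by proving the identity $\Stp{q(\grE)}=\bigl(\Stp{q(\grE)}\cap\grE_0^*\bigr)\cdot\St{q(\grE)}$ (its equation \eqref{claim}), first reducing to finitely generated $\Ga_\grE$ and then inducting on the rank $n$. The heart of the argument is a computation in the divisor group $\DT$ of the twisted polynomial ring $T=\grE_0[x_1;\sigma_1]$: Lemma~\ref{lem:invDiv} identifies $\dd(\Sigma_\tau(A))$ with the $\tau$-fixed divisors $\DT^\tau$ via a minimal-degree descent on effective divisors, and Lemma~\ref{lem:NH} computes the kernel of a twisted norm map for the generalized dihedral group generated by $\tau$ and the Galois action $H$, yielding $\alpha\in I_H(\DT)+\sum_h\DT^{h\tau}$ for $\alpha=\dd(a)$ with $a\in\Stp{q(\grE)}\cap A^*$. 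None of this machinery (nor any substitute for it) appears in your proposal, so the argument as written does not prove the theorem.
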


This theorem complements a result in \cite{hazw2}:  Suppose $D$ is a division
algebra finite-dimensional over its center $K = Z(D)$, and suppose $K$ has a Henselian 
valuation $v$.  It is well-known that $v$ has a unique extension to a valuation
on $D$.  The filtration on $D$ induced by the valuation yields an associated graded 
ring $\gr(D)$ which is a graded division algebra of finite rank over its center.  
If $\tau$ is a unitary involution on $D$ which is compatible with the valuation, 
there is an induced   graded  
involution $\wi \tau$ on~$\gr(D)$.  It was shown in \cite[Th.~3.5]{hazw2} that if the restriction
of $v$ to the $\tau$-fixed field $K^\tau$ is Henselian and $K$ is 
tamely
\vfill\eject
\noindent ramified over
$K^\tau$,  
 then $\tau$ is compatible with 
$v$, $\widetilde \tau$ is unitary,  and  
\begin{equation}\label{assocgraded}   
\SK(D, \tau) \, \cong \, \SK(\gr(D), \wi\tau).
\end{equation}
This theorem served to focus attention on unitary $\SK$ for graded division algebras.
The graded division algebra $\gr(D)$ has a significantly simpler structure than 
the valued division algebra $D$; notably $\gr(D)$~has a much more tractable group of units 
(as they are all homogeneous).  Consequently, $\SK$ calculations are often
 substantially easier
 for $\gr(D)$ than for $D$.  This was demonstrated 
in \cite{hazw2} and  \cite{W},  where many formulas for $\SK(\gr(D), \wi\tau)$
were proved.  By that approach new results  on $\SK(D, \tau)$ were  
obtained, as well as new and simpler proofs of results that had previously been 
proved by calculations that were often quite complicated.
 Now, by virtue of the theorem 
proved here, all the results proved in \cite{hazw2} and \cite{W} on $\SK(\grE,\tau)$
for a graded division algebra $\grE$ carry over to yield corresponding results for 
$\SK(q(\grE), \tau)$.

One of the key results for unitary $\SK$ is the \lq \lq Stability Theorem,"  
which says that the unitary $\SK$ is unchanged on passage from a division algebra
$D$ to a  rational division algebra over $D$.  (This was originally proved in 
\cite[\S 23]{y1var}.)  We will show at the end of \S\,4 that the Stability Theorem
is a quick  corollary of our  theorem. 

When the torsion-free abelian grade group $\Ga_\grE$ of a graded division algebra
$\grE$ is finitely-generated (hence a free abelian group), $\grE$ has a concrete 
description as an iterated twisted Laurent polynomial ring over the 
division ring $\grE_0$, as follows: Take any homogeneous 
$x_1, \ldots, x_n$ in the group $\grE^*$ of units of $\grE$ such that 
$\Ga_\grE= \Z\deg(x_1) \oplus \ldots \oplus \Z\deg(x_n)$.  Then
$\grE = \grE_0[x_1, x_1\inv, \ldots, x_n, x_n\inv; \sigma_1, \ldots, \sigma_n]$, 
i.e., $\grE=\grE_n$, where for $i = 1, 2, \ldots, n$, $\grE_i = \grE_{i-1}
[x_i, x_i\inv; \sigma_i]$, which is a Laurent polynomial ring with multiplication twisted
by the relation $x_i c = \sigma_i(c) x_i$ for all $c\in \grE_{i-1}$.  Each $\sigma_i$
is a graded (i.e., degree-preserving) automorphism of~$\grE_{i-1}$, and $\sigma_i$  is 
completely determined by its action on $\grE_0$ and on $x_1, x_2, \ldots, x_{i-1}$.
To assure that $\grE$~is finite-dimensional over its center, it is assumed that 
some power of $\sigma_i$ is an inner automorphism of $\grE_{i-1}$.  
The quotient division ring $q(\grE)$ is the iterated twisted rational division algebra
\begin{equation} \label{An}
A_n = \grE_0(x_1, \ldots, x_n; \sigma_1, \ldots, \sigma_n),
\end{equation}
 which is also
the quotient division algebra of the iterated twisted polynomial ring 
$\grE_0[x_1, \ldots, x_n; \sigma_1, \ldots, \sigma_n]$.  
 
 In \cite{ynvar} the second author gave formulas for $\SK(A_n, \tau_n)$
for the iterated twisted rational division algebras $A_n$ as in \eqref{An}
above, for a unitary involution $\tau_n$ on $A_n$ arising from a unitary graded
involution on the iterated twisted polynomial ring $\grE_0[x_1, \ldots, x_n; 
\sigma_1, \ldots, \sigma_n]$.  The crucial identity
\begin{equation}\label{SigAn}
\Sigma_{\tau_n}'(A_n) \, =\, \big(\Sigma_{\tau_n}'(A_n) \cap \grE_0\big) \cdot
\Sigma_{\tau_n}(A_n).
\end{equation}
is given in that paper, with a much
too brief and incomplete sketch of proof.
(More detailed proofs of this identity and other results in \cite{ynvar} are given in 
this author's habilitation, which, regrettably, was never published.
A complete proof of the $n = 1$ case was given in \cite{y1var}.) As will be seen
in \S\,4 below, the proof of this identity, stated there as \eqref{claim}, constitutes the 
bulk of the proof of our Theorem.   
Our proof of this identity is by induction on $n$, as is done in \cite{ynvar}, but the
proof differs susbstantially, in that it is carried out as much as possible 
 by calculations 
in the divisor group~$\DT$ of the twisted polynomial ring in one variable
$T = \grE_0[x_1;\sigma_1]$.  This $\DT$ is a free abelian group, and calculations 
with it  
are significantly simpler  than with the quotient division ring~$q(T)$.   
The style of proof here is analogous to the approach in 
\cite[\S\,5]{hazw1} to proving the corresponding result for the
nonunitary $\SK(\grE)$ of a graded division algebra. 

Th.~\ref{th:main} above, when combined with \eqref{assocgraded},  provides a unifying
perspective for understanding why there is such a great similarity 
between the formulas for 
$\SK(D,\tau)$ for $D$ a division algebra over a Henselian field, as in 
\cite{y} and \cite{yy} and the formulas for $\SK(\grE_0(x_1, \ldots, x_n; \sigma_1,
\ldots, \sigma_n), \tau_n)$ given in \cite{y1var} and\cite{ynvar}:    The formulas in each 
case coincide with   
formulas for $\SK(\grE, \tau)$ for a related graded division algebra $\grE$. 

 Theorem~\ref{th:main} and formula \eqref{assocgraded} are unitary analogues to results for nonunitary 
$\SK$ for division algebras over Henselian fields,  graded division algebras, and 
their associated quotient division algebras given in \cite[Th.~4.8, Th.~5.7]{hazw1}.   
This is another manifestation of the philosophy that results about $\SK$ ought
to have corresponding results for the unitary $\SK$.  From the perspective of
algebraic groups, this pholosophy is motivated by the fact that division algebras
are associated with algebraic groups of inner type $A_n$, while division algebras
with unitary involution are associated algebraic groups of outer type~$A_n$
(see, e.g., \cite[Ch.~VI]{BoI}).

\bigskip 

\section{Graded division algebras and unitary involutions}

In this section we recall some basic known facts about graded division algebras
and unitary involutions which will be used in the proof of Th.~\ref{th:main}.
A good reference for the properties of graded division algebras stated here 
without proof is \cite{HwW2}.

Let $\Gamma$ be a torsion-free abelian group.  A ring $\grE$ is a {\it graded 
division ring} (with grade group in $\Gamma$) if $\grE$ has additive subgroups 
$\grE_\gamma$ for $\gamma \in \Gamma$ such that 
$\grE = \bigoplus_{\gamma\in \Gamma}\grE_\gamma$ 
and $\grE_ \gamma \cdot \grE_\delta =
\grE_{\gamma+\delta}$ for all $\gamma, \delta \in \Gamma$, and each 
$E_{\gamma}\setminus \{0\}$ lies in~$\grE^*$, the group of units of $\grE$. 
For background on graded division rings and proofs  of their properties mentioned
here, see \cite{HwW2}.
The grade 
group of $\grE$  is 
$$
\Gamma_\grE \, = \, \{\gamma\in \Gamma \mid \grE_\gamma \ne \{0\}\,\},
$$
a subgroup of $\Gamma$.  For $a\in \grE_\gamma \setminus \{0\}$ we write
$\deg(a) = \gamma$.  A significant property is that 
$\grE^* = \bigcup_{\ga \in \Ga_\grE} E_\ga\setminus\{0\}$, i.e., 
every unit of $\grE$ is actually homogeneous.  ($\Ga_\grE$ torsion-free is 
needed for this.)  Thus, $\grE$ is not a division ring if $|\Ga_\grE|>1$.
But, $\grE$ has no zero divisors.  (This also depends on having $\Ga_\grE$ 
torsion-free.)  However, $\grE_0$ is a division ring, and each 
$\grE_\ga$ ($\ga\in \Ga_\grE$) is a $1$-dimensional left- and right-
$\grE_0$-vector  space.  

Let $\grM$ be any graded left $\grE$-module, 
i.e., $\grM$ is a left $\grM$-module with additive subgroups $\grM_\ga$ 
such that $\grM = \bigoplus_{\ga \in \Ga} \grM_\ga$ and 
$\grE_\ga\cdot \grM_\varepsilon \subseteq \grM_{\ga+\varepsilon}$ for all $\gamma,
\varepsilon\in \Ga$. Then, $\grM$ is a free $\grE$-module with 
homogeneous base, and any two such bases have the same cardinality, 
which is called the dimension, $\dim_\grE(\grM)$;  $\grM$ is therefore said 
to be a graded vector space over $\grE$.  

Let $\grZ= Z(\grE)$, the center of $\grE$, which is a graded subring of $\grE$.
Indeed, $\grZ$ is a {\it graded field}, i.e., a commutative graded division ring.
Then $\grE$ is a  left (and right) graded $\grZ$-vector space, and we 
write $\DIM\grE\grZ$ for $\dim_\grZ(\grE)$.  In this paper we work exclusively 
with finite-dimensional graded division algebras, i.e., those $\grE$ with 
$\DIM \grE{\grZ}<\infty$.  Clearly $\grZ_0$ is a field, and $\grE_0$
is a finite-dimensional $\grZ_0$-algebra.  Moreover, $\Ga_\grZ$ is a 
subgroup of $\Ga_\grE$, and it is easy to verify the \lq\lq fundamental
equality"
\begin{equation}\label{FE}
\DIM \grE\grZ \, = \, \DIM{\grE_0}{\grZ_0}\, \IND{\Ga_\grE}{\Ga_\grZ}.
\end{equation}
We have $\grZ_0\subseteq Z(\grE_0)\subseteq \grE_0$.  Let $\Gal(Z(\grE_0)/\grZ_0)$
be the Galois group for the finite-degree field extension $Z(\grE_0)$ of
$\grZ_0$.  There is a well-defined canonical map 
$$
\theta_\grE\colon \Ga_\grE \to \Gal(Z(\grE_0)/\grZ_0) \qquad \text{given by}
\qquad\theta_\grE(\deg(a)) \colon c\mapsto a\hsp c\hsp a\inv \ \ 
\text{for all} \ \  a\in \grE^*, c\in \grE_0. 
$$
clearly $\Ga_\grZ \subseteq \ker(\theta_\grE)$, so $|\im(\theta_E)|\le 
\IND{\Ga_\grE}{\Ga_\grZ} <\infty$.  Moreover, the fixed field of 
$\im(\theta_\grE)$ is $\grZ_0$.  Hence, $Z(\grE_0)$ is Galois over 
$\grZ_0$ with abelian Galois group $\Gal(Z(\grE_0)/\grZ_0) = \im(\theta_\grE)$.

  Since $\grZ$ is a commutative ring with no zero divisors, it has a quotient field 
$q(\grZ)$.  Then $\grE$ has its ring of central quotients 
$$
q(\grE) \, = \, \grE \otimes_\grZ q(\grZ).
$$
Because $\grE$ is a free, hence torsion-free $\grZ$-module, the canonical map 
$\grE \to q(\grE)$, $a\mapsto a\otimes 1$, is injective.  Therefore, we 
view $\grE$ as a subring of $q(\grE)$.
Note that $q(\grE)$ has no zero divisors since $\grE$ has none.  
Furthermore,  $q(\grE)$ is a 
$q(\grZ)$-algebra with ${\DIM {q(\grE)}{q(\grZ)} = \DIM \grE \grZ <\infty}$.  
Hence, $q(\grE)$ is a division ring, called the quotient division algebra of
$\grE$.  Clearly, $Z(q(\grE)) = q(\grZ)$. The index of $\grE$ is defined to be
${\ind(\grE) = \sqrt{\DIM \grE\grZ} = \ind(q(\grE)) \in \Z}$.  

It is known that $\grE$ is an Azumaya algebra over $\grZ$, and hence from general 
principles that there is a reduced norm map $\Nrd_\grE\colon \grE \to \grZ$.  
In fact, by \cite[Prop.~3.2(i)]{hazw1}, $\Nrd_\grE$ coincides with the restriction 
to~$\grE$ 
of the usual reduced norm $\Nrd_{q(E)}$ on $q(\grE)$.  Also, by 
\cite[Prop.~3.2(iv)]{hazw1}, for $a\in \grE_0$, 
we have
\begin{equation}\label{normformula}
\Nrd_\grE(a) = N_{Z(\grE_0)/\grZ_0}(\Nrd_{\grE_0}(a))^\lambda\qquad\text{where}\quad
\lambda = \ind(\grE)\big/ \big(\ind(\grE_0) \cdot \DIM{Z(\grE_0}{\grZ_0}\big),
\end{equation}
where $\Nrd_{\grE_0}$ is the reduced norm for $\grE_0$ and $N_{Z(\grE_0)/\grZ_0}$
is the field norm from $Z(\grE_0)$ to $\grZ_0$.

A graded involution on the graded division algebra $\grE$ is a ring antiautomorphism
$\tau\colon \grE \to \grE$ such that $\tau^2 = \id_\grE$ and $\tau(\grE_\gamma)
 = \grE_\gamma$ for each $\gamma \in \Gamma$.    Such a $\tau$ is said to be 
{\it unitary} (or of the second kind) if $\tau|_\grZ \ne \id_\grZ$ where 
$\grZ = Z(\grE)$. Assuming $\tau$ is unitary, let 
$\grF = \grZ^\tau = \{ c\in \grZ\mid \tau(c) = c\}$, which is a graded subfield
of $\grZ$ with $\DIM \grZ \grF = 2$. It follows from the fundamental equality
that  either $\DIM{\grZ_0}{\grF_0} = 2$ and $\Gamma_\grZ = \Gamma_\grF$ or 
$\grZ_0 =\grF_0$ and $\IND{\Gamma_\grZ}{\Gamma_\grF} = 2$.  The second case, 
where the involution induced by $\tau$ on $\grE_0$ is not unitary, tends to be 
uninteresting, but it can occur.  
We write $\tau$ also for the induced involution 
$\tau\otimes \id_{q(\grF)}$  on $q(\grE) = \grE \otimes_\grF q(\grF)$.  
Recall  that for all $a\in \grE$ we have 
\begin{equation}\label{Nrdcompat}
\Nrd_\grE(\tau(a))\, = \, \tau(\Nrd_E(a)),
\end{equation}
since this equality holds for $\Nrd_{q(\grE)}$. 
(For, if $a\in q(\grE)$, then $\Nrd_{q(\grE)}(a)$
is determined by the minimal polynomial $p_a$ of $a$ over 
$Z(q(\grE))$, and  $p_{\tau(a)} = \tau(p_a)$.) 
 
If $\tau'$ is another unitary graded involution on $\grE$, we write
$\tau \sim \tau' $ if $\tau|_\grZ = \tau'|_\grZ$.  In particular, for any 
$c\in \grE^*$, if $\tau(c) c\inv \in \grZ^*$, then $\tau' = \int(c) \circ \tau$
is a unitary graded involution on $\grE$, and $\tau' \sim \tau$.  
Here $\int(c)$ denotes the inner automorphism $a\mapsto cac\inv$ of $\grE$.
Since $c$ is homogeneous, $\int(c)$ is clearly a graded (i.e., 
degree-preserving) automorphism of $\grE$.  

For a unitary graded involution $\tau$ on $\grE$, set 
$
S_\tau(\grE) \, = \, \{ a\in \grE^* \mid \tau(a) = a\}, 
$
the set of symmetric units of $\grE$, and set
$$
\Sigma_\tau(\grE)\, = \, \langle S_\tau(\grE)\rangle \qquad \text{and}
\qquad \Sigma_\tau'(\grE) \,=\, \{ a\in \grE^* \mid \Nrd_
\grE(a) \in S_\tau(\grE)\}.
$$
Then, by definition, $\SK(\grE, \tau) = \Stp \grE/\St \grE$.  


We recall a few fundamental facts about unitary involutions 
on ungraded division algebras which have analogues for graded division aglebras:

\vfill\eject 

\begin{lemma}\label{lem:equivrem} 
Let $D$ be a division algebra finite-dimensional over its center, and let 
$\tau$ and $\tau'$ be unitary involutions on $D$.
\begin{enumerate}
\item[(a)]
If  $\tau\sim \tau'$
$($i.e., $\tau|_{Z(D)} = \tau'|_{Z(D)}$$)$, then 
${\Sigma_{\tau'}'(D) = \Sigma_\tau'(D)}$ and
${\Sigma_{\tau'}(D) = \St D}$,\break so ${\SK(D,\tau') = \SK(D, \tau)}$. 
\item[(b)]
$[D^*,D^*] \subseteq \St D$, where $[D^*, D^*] = 
\langle aba\inv b\inv\mid a,b\in D^*\rangle$.
\end{enumerate}
\end{lemma}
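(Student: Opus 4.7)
For (a), I would start from the fact that $\tau$ and $\tau'$ agree on $Z(D)$, so $\tau'\circ\tau$ is a ring automorphism of $D$ fixing the center pointwise; by the Skolem--Noether theorem it equals $\int(c)$ for some $c\in D^*$, giving $\tau' = \int(c)\circ\tau$. Writing out $(\tau')^2 = \id$ shows that $z := \tau(c)c^{-1}$ lies in $Z(D)^*$ and satisfies $z\tau(z) = 1$. I would then invoke Hilbert 90 for the quadratic extension $Z(D)/Z(D)^\tau$ to write $z = \alpha\tau(\alpha)^{-1}$ for some $\alpha\in Z(D)^*$; replacing $c$ by $c\alpha^{-1}$ (which does not alter $\int(c)$ since $\alpha$ is central) reduces us to the case $\tau(c)=c$, i.e., $c\in S_\tau(D)$. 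A one-line check then gives $\tau'(cs) = cs$ for every $s\in S_\tau(D)$, so $c\cdot S_\tau(D)\subseteq S_{\tau'}(D)$; combined with $c^{-1}\in S_{\tau'}(D)$, this yields $S_\tau(D)\subseteq \Sigma_{\tau'}(D)$ and hence $\St{D}\subseteq \Sigma_{\tau'}(D)$. The reverse inclusion follows by symmetry via $\tau = \int(c^{-1})\circ\tau'$. The equality $\Stp{D} = \Sigma'_{\tau'}(D)$ is immediate, since $\Nrd_D$ is $Z(D)$-valued and $\tau,\tau'$ agree there.

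For (b), the plan has two steps. First, to show $\St{D}$ is normal in $D^*$, I would exhibit, for any $g\in D^*$ and $s\in S_\tau(D)$, the decomposition
\[
gsg^{-1} \,=\, \bigl(g\tau(g)\bigr)\cdot\bigl(\tau(g)^{-1}\,s\,g^{-1}\bigr),
\]
and verify that both factors are $\tau$-symmetric (the second using $\tau(s)=s$ and $\tau(\tau(g)^{-1})=g^{-1}$). Second, for $a,b\in D^*$, both $(ab)\tau(ab) = ab\tau(b)\tau(a)$ and $(ba)\tau(ba) = ba\tau(a)\tau(b)$ lie in $S_\tau(D)$, and a short expansion gives
\[
(ab)\tau(ab)\cdot\bigl((ba)\tau(ba)\bigr)^{-1} \,=\, (ab)\,[\tau(b),\tau(a)]\,(ab)^{-1}.
\]
The left side lies in $\St{D}$, so by the normality just established, $[\tau(b),\tau(a)]\in\St{D}$. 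Since $\tau$ is a bijection of $D^*$, letting $x=\tau(b)$ and $y=\tau(a)$ range over $D^*$ yields $[x,y]\in\St{D}$ for all $x,y\in D^*$, and hence $[D^*,D^*]\subseteq\St{D}$.

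The only genuinely nontrivial point is the Hilbert 90 reduction in (a), which is needed to arrange that the inner twist relating $\tau$ and $\tau'$ is by a $\tau$-symmetric element; once that is in place, and once the normality decomposition in (b) is recorded, both parts collapse to short multiplicative manipulations in $D^*$.
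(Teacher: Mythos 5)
The paper does not actually prove this lemma: it quotes part (a) from \cite[Lemma~1]{yin} and part (b) from \cite[Prop.~17.26]{BoI}, so your attempt is being measured against the standard arguments. Your part (a) is essentially that standard argument (Skolem--Noether to get $\tau'=\int(c)\circ\tau$, then Hilbert 90 to make $c$ symmetric), and it is correct up to one normalization slip: with $z=\tau(c)c^{-1}=\alpha\tau(\alpha)^{-1}$, the replacement $c\mapsto c\alpha^{-1}$ gives $\tau(c')c'^{-1}=\tau(\alpha)^{-1}\alpha\, z=\alpha^2\tau(\alpha)^{-2}$, not $1$; you want $c\mapsto c\alpha$ (equivalently, write $z=\tau(\alpha)\alpha^{-1}$). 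Everything after that point is fine.

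Part (b) has a genuine gap. Step 1 (normality of $\St{D}$ via $gsg^{-1}=(g\tau(g))(\tau(g)^{-1}sg^{-1})$) is correct, but the key identity in step 2 is false. Computing directly,
$$
(ab)\tau(ab)\cdot\big((ba)\tau(ba)\big)^{-1}\;=\;ab\,\tau(b)\tau(a)\tau(b)^{-1}\tau(a)^{-1}\,a^{-1}b^{-1}\;=\;(ab)\,[\tau(b),\tau(a)]\,(ba)^{-1},
$$
whose tail is $(ba)^{-1}=a^{-1}b^{-1}$ rather than $(ab)^{-1}=b^{-1}a^{-1}$; the discrepancy is exactly the commutator you are trying to control, since
$$
(ab)\,[\tau(b),\tau(a)]\,(ba)^{-1}\;=\;\Big((ab)[\tau(b),\tau(a)](ab)^{-1}\Big)\cdot[a,b].
$$
Passing to $G=D^*/\St{D}$, where $\overline{\tau(x)}=\bar x^{-1}$ because $x\tau(x)\in S_\tau(D)$, the first factor on the right equals $[\bar a,\bar b]^{-1}$, so the (correct) relation collapses to $1=1$ and gives no information. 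This is not an accident of the particular identity: every fact you invoke ($x\tau(x)$ symmetric, $\tau(g)^{-1}sg^{-1}$ symmetric for $s$ symmetric, normality) holds formally in any group equipped with an anti-automorphism $\theta$ of order $2$, and for $H$ a nonabelian group of odd order with $\theta(x)=x^{-1}$ the only $\theta$-symmetric element is $1$, so $\Sigma_\theta(H)=\{1\}$ does not contain $[H,H]$. Hence no purely multiplicative manipulation of this kind can prove (b); the Platonov--Yanchevski\u\i\ argument reproduced in \cite[Prop.~17.26]{BoI} necessarily uses more of the structure of $(D,\tau)$ than the group $D^*$ with its anti-automorphism, and you should cite or follow that proof rather than the computation above.
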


For a proof of (a), see \cite[Lemma~1]{yin}, and for (b) see 
\cite[Prop.~17.26, p.~267]{BoI}.  Part (b) was originally proved by 
Platonov and Yanchevski\u\i.  See \cite[Remark~4.1(ii), Lemma~2.3(iv)]{hazw2} 
for the graded versions of (a) and (b).


\bigskip

\section{The divisor group of a twisted polynomial ring}
The proof of Th.~\ref{th:main}, both in the case  $\Gamma_\grE \cong \Z$ and also 
in the induction argument for $\Gamma_\grE \cong \Z^n$ will use properties 
of twisted polynomial rings in one variable over a division ring.
In this section we give the properties we need about the divisor group
of such a twisted polynomial ring.

Let $D$ be a division ring finite dimensional over its center $K$, and let $\sigma $
be an automorphism of $D$ whose restriction to $K$ has finite order, and let 
$T$ be the twisted polynomial ring
$$
T \, =\, D[x;\sigma], 
$$
consisting of polynomials $\sum_{i = 0}^k a_ix^i$ with $a_i \in D$, with the usual addition
of polynomials, but multiplication twisted by $\sigma$, so that 
$$
(ax^i)(bx^j)\, = \, a\sigma^i(b)x^{i+j}\qquad \text{for all}\ a,b\in D, \ i,j \ge0.
$$
For the factorization theory of such rings $T$, \cite[Ch.~1]{J} is an excellent 
reference.  Let $A = q(T) = D(x;\sigma)$, the quotient division ring of $T$, 
which is a twisted rational function division algebra in one variable.  We will make 
fundamental use of the \lq\lq divisor group" $\DT$ described in \cite[\S\,5]{hazw1}:    
Let $\mathcal S$ be the set of isomorphism classes $[S]$ of simple left $T$-modules
$S$; then 
$$
\DT\,=\, \textstyle \bigoplus\limits_{[S] \in \mathcal S} \Z[S],
$$ 
the free abelian group on $\SSS$.  (Note that in the commutative case when $D$ is a field 
and $\sigma = \id$, then $T$~is a polynomial ring, and $\DT$ is its usual 
divisor group.  This is the source of the terminology.) 
For simple $T$-modules $S, S'$, we have $\ann_T(S)$ and $\ann_T(S')$ are maximal
two-sided ideals of $T$, and $S\cong S'$ iff $\ann_T(S) = \ann_T(S')$.  Thus, we could 
have indexed $\DT$ by the maximal two-sided ideals of $T$; but, the indexing by 
simple modules is more natural for our purposes.
 We call an element 
$\alpha = \sum n_{[S]} [S]$ of $\DT$ a {\it divisor}, and call it an {\it effective divisor}
if every $n_{[S]} \ge 0$.   

Also, there is a degree homomorphism 
$$
\deg\colon \DT \to \Z \qquad \text{given by} \quad\deg\big(\tsum n_{[S]} [S]\big) \, = \,
\tsum n_{[S]} \dim_D(S).
$$
Note that if $M$ is any left $T$-module of finite length
(equivalently, finite-dimensional as a $D$-vector space), then $M$ determines
an effective Jordan-H\" older divisor
$$
jh(M) \, = \tsum \limits_{i = 1}^k [M_i/M_{i-1}],
$$
where $\{0\} = M_0\subsetneqq M_1 \subsetneqq \ldots \subsetneqq M_k= M$
is a chain of $T$-submodules of $M$ with each $M_i/M_{i-1}$ simple.  
The Jordan-H\" older Theorem shows that $jh(M)$ is well-defined.

Every simple left $T$-module has the form $T/Tp$ for some $p \in T$ with $p$
irreducible, i.e., $p$ has no factorization into a product of terms of positive
degree.  For nonzero $f\in T$ with $\deg(f) >0$, the division algorithm 
shows that every nonzero element of $T/Tf$ has the form $s+ Tf$ for some $s\in T$
with $\deg(s) < \deg(f)$.  Using this, it is easy to check 
(cf.~\cite[Lemma~5.2]{hazw1})
that for nonzero 
$f, g\in T$ of  positive degree
\begin{align}\label{eq:iso} 
\begin{split}
T/Tf \, \cong T/Tg\quad \text{iff} \quad&\deg(f) = \deg(g) \text{ and 
there exist nonzero } s,t\in T 
\text{ with }\\
&\deg(s) = \deg(t) <\deg(f) \text{ and } ft = sg.
\end{split} 
\end{align}

There is a divisor function 
$$
\delta\colon T\setminus \{0\} \to \DT \quad\ \  \text{given by }\  
\delta(f) = jh(T/Tf).
$$
It is easy to check that $\delta(fg) = \dd(f) + \dd(g)$, hence $\dd$
extends to a well-defined map $\dd\colon A^* \to \DT$ given by 
$\dd(fz\inv) = \dd(f) -\dd(z)$ for all $f\in T\setminus\{0\}$,
$z\in Z(T)\setminus\{0\}$. Clearly, $\delta(T)$ is the monoid of 
effective divisors in $\DT$.  Note also that $\deg(\dd(f)) = \deg(f)$
for all $f \in T \setminus \{0\}$.   
 It is proved in \cite[Prop.~5.3]{hazw1} that there 
is an exact sequence:
\begin{equation}\label{seq:Div}
1\, \longrightarrow\, [A^*,A^*]D^*\, \longrightarrow\, A^*\, \xrightarrow{\ \, \dd\, \ } \, 
\DT\, \longrightarrow\,0 
\end{equation}

Let $R = Z(T)$; so $R$ is the polynomial ring $K^{\sigma}[y]$, where $K^\sigma
= \{b\in K\mid \sigma(b) = b\}$ and $y = c\inv x^m$, with $m$  minimal such that
$\sigma^m|_K = \id$ and $c\in D^*$ satisfying $\sigma^m = \int(c)$ on $D$.
Note that $q(R) = Z(A)$.  This $R$ has its own divisor group $\Div(R)$, definable 
just as with $\DT$, and there is a corresponding divisor map $\dd_R\colon q(R)^* \to \Div(R)$.
The reduced norm map $\Nrd_A\colon A \to Z(A)$ maps $T$ to $R$, (as $T$~is integral
over $R$, and $R$ is integrally closed), and it is shown in \cite[Prop.~5.4]{hazw1} that 
there is a corresponding \lq\lq reduced norm" map $\DNrd\colon \DT \to \Div(R)$
which is injective, and such that the following diagram commutes: 
\begin{equation}\label{DNrddiag} 
\begin{CD}
A^* @>\dd>> \DT\\
@V\Nrd_AVV @V\DNrd VV \\
q(R)^* @>\dd_R>> \Div(R)
\end{CD}
\end{equation}

Of course, $T$ is a graded ring, with $T_i = Dx^i$ for all 
nonnegative $i\in \Z$.  Suppose $\rho\colon T \to T$ is a graded automorphism of 
$T$, i.e., a ring automorphism such that $\rho(Dx^i) = Dx^i$ for all $i$.
Then, $\rho$ restricts to a graded automorphism of $R$ and also determines 
ring automorphisms of $A$ and $q(R)$, all denoted $\rho$.  

\begin{lemma}\label{auto}
The graded automorphism $\rho$ of $T$ determines an automorphism of $\DT$ mapping 
$\SSS$ to ~itself, and also determines an automorphism of $\Div(R)$.  The automorphisms
determined by $\rho$ on each of the terms in diagram \eqref{DNrddiag} are 
compatible with the maps in that diagram.  
\end{lemma}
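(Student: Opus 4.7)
My plan is to produce the automorphism of $\DT$ via the standard device of twisting modules by $\rho$. For a left $T$-module $M$, let $M^\rho$ denote the abelian group $M$ equipped with the new $T$-action $t\cdot_\rho m = \rho^{-1}(t)\cdot m$. The functor $M \mapsto M^\rho$ is an autoequivalence of the category of left $T$-modules which preserves length and sends simple modules to simple modules, so the rule $[S] \mapsto [S^\rho]$ defines a permutation of $\SSS$ that extends $\Z$-linearly to an automorphism $\phi_\rho$ of $\DT$. Applying the same recipe to the commutative twisted polynomial ring $R = Z(T) = K^\sigma[y]$, to which $\rho$ restricts as a graded automorphism, produces a permutation of the isomorphism classes of simple $R$-modules and hence an automorphism $\psi_\rho$ of $\Div(R)$.

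The key compatibility to establish is $\dd(\rho(f)) = \phi_\rho(\dd(f))$ for every nonzero $f \in T$, and the analogue for $R$. For this I would exhibit the left $T$-module isomorphism
\[
T/T\rho(f) \,\iso\, (T/Tf)^\rho, \qquad t + T\rho(f) \,\longmapsto\, \rho^{-1}(t) + Tf,
\]
noting that $\rho^{-1}(T\rho(f)) = Tf$ makes the map well-defined, and that $T$-linearity holds precisely because of the twisted action on the target. Taking Jordan-H\"older divisors of both sides yields the desired identity on $T \setminus \{0\}$, and multiplicativity of $\dd$ extends it to all of $A^*$.

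The compatibility with the vertical maps of \eqref{DNrddiag} then falls out formally. Since $\rho$ is a ring automorphism of $A$, and $\Nrd_A$ is determined on each element by its minimal polynomial over $Z(A)$, we have $\Nrd_A(\rho(a)) = \rho(\Nrd_A(a))$ for all $a \in A^*$. Combining this with the two horizontal compatibilities, for $f \in T \setminus \{0\}$ we obtain
\[
\psi_\rho\bigl(\DNrd(\dd(f))\bigr) = \psi_\rho\bigl(\dd_R(\Nrd_A(f))\bigr) = \dd_R\bigl(\Nrd_A(\rho(f))\bigr) = \DNrd\bigl(\dd(\rho(f))\bigr) = \DNrd\bigl(\phi_\rho(\dd(f))\bigr),
\]
and since $\dd$ is surjective by \eqref{seq:Div}, this gives $\psi_\rho\circ \DNrd = \DNrd\circ \phi_\rho$ on all of $\DT$.

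The only point requiring care is getting the direction of the module twist correct, so that $T/T\rho(f) \to (T/Tf)^\rho$ is genuinely $T$-linear rather than merely $\rho$-semilinear; once this is set up, everything else is categorical bookkeeping or straightforward diagram chasing.
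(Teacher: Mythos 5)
Your proof is correct, but it constructs the automorphism of $\DT$ by a different mechanism than the paper does. The paper defines the action of $\rho$ on $\DT$ top-down: since $\rho$ preserves $D^*$ and $[A^*,A^*]$, it preserves $\ker(\dd)=[A^*,A^*]D^*$, so by the exact sequence \eqref{seq:Div} it descends to an automorphism of $\DT\cong A^*/\ker(\dd)$ satisfying $\rho(\dd(a))=\dd(\rho(a))$ by fiat; the fact that this permutes $\SSS$ is then read off from $\rho(\dd(p))=\dd(\rho(p))=[T/T\rho(p)]$ for $p$ irreducible. You instead build the permutation of $\SSS$ first, via the twist functor $M\mapsto M^\rho$, and then verify the identity $\dd(\rho(f))=\phi_\rho(\dd(f))$ through the explicit isomorphism $T/T\rho(f)\iso (T/Tf)^\rho$ (whose $T$-linearity you correctly flag as the one point where the direction of the twist matters). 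Your route has the advantage of making the permutation of $\SSS$ — and its well-definedness, i.e.\ independence of the irreducible $p$ chosen to represent a class $[S]$ — automatic from the autoequivalence, and of not needing \eqref{seq:Div} until the final surjectivity argument; the paper's route is shorter because it gets the automorphism of $\DT$ in one line from the exact sequence and only has to check afterward that the distinguished basis is preserved. The remaining steps — $\Nrd_A(\rho(a))=\rho(\Nrd_A(a))$ via minimal polynomials, and the diagram chase using surjectivity of $\dd$ to deduce $\psi_\rho\circ\DNrd=\DNrd\circ\phi_\rho$ — are identical to the paper's.
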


\begin{proof}
Since $\rho(D^*) = D^*$ and $\rho([A^*,A^*]) = [A^*,A^*]$, the exact sequence
\eqref{seq:Div} shows that $\rho$ determines a well-defined automorphism  $\rho$ of 
$\DT$ given by $\rho\dd(a)= \dd(\rho(a))$ for all $a\in A^*$.  Any class  
in $\SSS$ has the form $[T/Tp]$ for some irreducible $p$ in $T$.  Then,
$\rho(p)$ is also irreducible in $F$; so, ${\rho[T/Tp] = [T/T\rho(p)] \in \SSS}$.  
For~$\Div(R)$, $\rho$ is defined analogously by $\rho(\dd_R(q)) = \dd_R(\rho(q))$
for all $q\in q(R)^*$.  Since $\rho$ is an automorphism of $A$, we have 
$\rho(\Nrd_A(a)) = \Nrd_A(\rho(a))$ for all $a\in A$.  For, $\Nrd_A(a)$
is determined by the constant term of the minimal polynomial
$p_a\in Z(A)[X]$ of $a$ over $Z(A)$, and $p_{\rho(a)} = 
\rho(p_a)$.  Thus, we have $\rho\circ \dd = \dd\circ \rho$, 
$\rho\circ\dd_R = \dd_R\circ \rho$, and $\rho\circ \Nrd_A = \Nrd_A \circ \rho$.  
Since diagram \eqref{DNrddiag} commutes and $\dd$ is surjective, it follows that 
$\rho\circ 
\DNrd = \DNrd\circ \rho$.
\end{proof}

Suppose $\tau\colon T \to T$ is a unitary graded involution.  That is, $\tau$
is a ring antiautomorphism of $T$ with $\tau^2 = \id_T$, $\tau|_R \ne \id$, and 
$\tau(Dx^i) = Dx^i$ for all $i$.  (The last condition is equivalent to:
$\tau(D) = D$ and $\tau(x) = dx$ for some $d\in D^*$.)  This~$\tau$ extends
to a unitary involution on $A$ given by $\tau(fr\inv) = \tau(r)\inv \tau(f)$
for all $f\in T$, $r\in R\setminus \{0\}$.  Since $\tau([A^*,A^*]) = [A^*,A^*]$
and $\tau(D^*) = D^*$, the exact sequence \eqref{seq:Div} above shows
that $\tau$ determines an automorphism of order at most $2$ of $\DT$, also denoted
$\tau$.  This map $\tau\colon \DT \to \DT$ is given by $\tau(\dd(a))
= \dd(\tau(a))$ for all $a\in A^*$.  (For a simple left $T$-module~$S$,
we have $S\cong T/Tp$ for some irreducible $p\in T$.  Then, $\tau(p)$ is irreducible
in $T$, and $\tau[S] = [T/T\tau(p)]$.  It may seem surprising that this is well-defined,
independent of the choice of $p$, but the well-definition follows easily from 
\eqref{eq:iso}. In terms of two-sided ideals, $\tau[S]$ is the isomorphism class of 
simple left $T$-modules with annihilator $\tau(\ann_T(S))$.) Let
 $\DT^\tau = \{\alpha\in \DT
\mid \tau(\alpha) = \alpha\}$.  

\begin{lemma}\label{lem:invDiv} 
Let $\tau$ be a unitary graded involution on $T$.  
Suppose $\tau|_D \ne \id$ or $T$ is noncommutative.
Then, 
$$
\dd\big(\Sigma_\tau(A)\big)\, = \, \DT^\tau.
$$
\end{lemma}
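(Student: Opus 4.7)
The easy inclusion $\dd(\Sigma_\tau(A)) \subseteq \DT^\tau$ is immediate: for any symmetric $s \in A^*$, $\tau(\dd(s)) = \dd(\tau(s)) = \dd(s)$, and this extends by multiplicativity from $S_\tau(A)$ to $\Sigma_\tau(A)$ since $\tau$ is a group endomorphism of~$\DT$.

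For the reverse inclusion, I exploit that $\DT$ is free abelian on $\SSS$, with $\tau$ permuting~$\SSS$ in orbits of size $1$ or $2$. Hence $\DT^\tau$ is generated by the orbit sums $[S] + \tau[S]$ for $2$-orbits together with the individual fixed basis elements $[S]$ (those with $\tau[S] = [S]$), and it suffices to show each such generator lies in $\dd(\Sigma_\tau(A))$. The $2$-orbit case is clean: writing $[S] = [T/Tp]$ for irreducible $p \in T$, the element $p\tau(p) \in T$ is symmetric because $\tau(p\tau(p)) = \tau(\tau(p))\tau(p) = p\tau(p)$, and $\dd(p\tau(p)) = \dd(p) + \dd(\tau(p)) = [S] + \tau[S]$.

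The crux is the fixed-class case $\tau[S] = [S]$, where one must exhibit some element of $\Sigma_\tau(A)$ whose divisor is exactly~$[S]$. The similarity $T/Tp \cong T/T\tau(p)$ produces, via \eqref{eq:iso}, auxiliary $s, t \in T$ with $\deg(s) = \deg(t) < \deg(p)$ and $pt = s\tau(p)$; these form the raw material for modifying~$p$ into a symmetric shape. If $T$ is commutative (so $T = D[x]$ with $\sigma = \id$ and $D$ a field), the similarity collapses to $\tau(p) = up$ for some $u \in D^*$ with $u\tau(u) = 1$; since $\tau|_D \neq \id$ by hypothesis, Hilbert~90 applied to the quadratic extension $D/D^\tau$ furnishes $v \in D^*$ with $u = v\tau(v)\inv$, and then $q = vp$ is symmetric with $\dd(q) = [S]$. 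If $T$ is noncommutative, I use in place of Hilbert~90 the inclusion $[A^*,A^*] \subseteq \Sigma_\tau(A)$ (the graded analog of Lemma~\ref{lem:equivrem}(b)): starting from any $a \in A^*$ with $\dd(a) = [S]$, the fact that $\tau(a) a\inv \in \ker(\dd) = [A^*,A^*] D^*$, together with $a\tau(a) \in S_\tau(A)$, allows me to adjust~$a$ by a suitable element of $D^*$ and a commutator so that the result lies in $\Sigma_\tau(A)$ while still having divisor~$[S]$.

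\textbf{Main obstacle.} The fixed-orbit case is the hard part. Concretely, one must show that the $2$-torsion obstruction in the abelian quotient $A^*/\Sigma_\tau(A)$ (arising because $\tau$ acts as inversion on this quotient via $g\tau(g) \in S_\tau(A)$) can be killed by multiplying by an element of $D^*$; this is exactly what the hypothesis ``$\tau|_D \neq \id$ or $T$ noncommutative'' guarantees, and it is what fails in the degenerate commutative case where $\tau|_D = \id$.
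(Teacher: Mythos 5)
Your setup is sound: the inclusion $\dd(\St A)\subseteq \DT^\tau$, the reduction of the reverse inclusion to the $\langle\tau\rangle$-orbit generators of the permutation module $\DT$, and the two-orbit case via the symmetric element $p\hsp\tau(p)$ are all correct. Your treatment of the fixed-class case when $T$ is commutative is also correct and is a genuinely different (and cleaner) route than the paper takes there: since the units of $T=D[x]$ are just $D^*$, the similarity forces $\tau(p)=up$ with $N_{D/D^\tau}(u)=1$, and Hilbert 90 for the quadratic extension $D/D^\tau$ produces the symmetric multiple $vp$ directly.

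The gap is the fixed-class case for noncommutative $T$, which is the entire content of the lemma, and your paragraph for it is an assertion rather than an argument. From $\dd(a)=[S]=\tau(\dd(a))$ you correctly get $\tau(a)=\kappa\hsp d\hsp a$ with $\kappa\in[A^*,A^*]$ and $d\in D^*$, hence in the abelian group $A^*/\St A$ (on which $\tau$ acts as inversion) the relation $\ov a^{\hsp 2}=\ov d^{\hsp -1}$. But to conclude $[S]\in\dd(\St A)$ you need $\ov a\in \ov{D^*}$, i.e., you must extract a square root of $\ov d^{\hsp-1}$ inside $\ov{D^*}$ and identify it with $\ov a$; nothing in the data $\tau(a)a^{-1}\in[A^*,A^*]D^*$ and $a\tau(a)\in S_\tau(A)$ does this, and "adjust $a$ by an element of $D^*$ and a commutator" only moves $\ov a$ within its coset of $\ov{D^*}$, which is exactly the coset you cannot control. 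The paper resolves this $2$-torsion obstruction not by group-theoretic manipulation but by a descent: it takes an effective divisor in $\DT^\tau\setminus\dd(\St A)$ of minimal degree, reduces to a single irreducible $p$ with $\tau[T/Tp]=[T/Tp]$, and then exploits the explicit relation $pf=g\tau(p)$ from \eqref{eq:iso} with $\deg(f)=\deg(g)<\deg(p)$. A three-way analysis ($\tau(f)=g$; $\tau(f)\ne -g$, where one passes to $f'=f+\tau(g)$; and $\tau(f)=-g\ne g$, where one twists $\tau$ by a skew-symmetric $c\in D^*$ with $\tau(c)=-c$ and invokes Lemma~\ref{lem:equivrem}(a)) produces in each case an effective divisor in $\DT^\tau\setminus\dd(\St A)$ of strictly smaller degree, a contradiction. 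Note also that your per-generator reduction obstructs this descent: the smaller divisor $\dd(f)$ produced is generally not a basis element $[S']$, so the induction must run over all effective $\tau$-fixed divisors, not over $\SSS$. Finally, your noncommutative case silently assumes access to the subcase $\tau|_D=\id$ with $\sigma\ne\id$, which the paper handles separately by replacing $\tau$ with $\int(dx)\circ\tau$; your sketch does not address it.
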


\begin{proof}
Suppose first that $\tau|_D \ne \id$.  

Let $\Omega = \dd\big(\Sigma_\tau(A)\big) \subseteq \DT$.  Since $\delta$
maps generators of $\Sigma_\tau(A)$ into $\DT^\tau$, we have
${\Omega \subseteq \DT^\tau}$.  We must prove that this inclusion is equality.
Suppose that $\OM \subsetneqq \DTt$.

Note  that if $\alpha \in \DT$, 
say $\alpha = \dd(a)$, then $\alpha + \tau(\alpha) = \dd(a\tau(a)) \in \Omega$.
Take any $\eta\in \DTt \setminus \OM$.  We can write $\eta = \alpha - \beta$, 
where $\alpha$ and $\beta$ are effective divisors.   Then, 
$$
\alpha + \tau(\beta) \, = \, \eta + (\beta +\tau(\beta)) \, \equiv \, \eta\ 
(\modd\, \Omega). 
$$
So, $\alpha +\tau(\beta)$ is an effective divisor in $\DTt \setminus \OM$.

Let $\xi$ be an effective divisor in $\DTt\setminus \OM$ of minimal degree.  
Necessarily $\deg(\xi) >0$, as $\xi \ne 0$.  Say $\xi = \dd(z)$ for some 
$z\in T$.  So, $\deg(z) = \deg(\xi) >0$.  Factor $z$ into irreducibles in $T$,
say $z = p_1\ldots p_\ell$, and let $\pi_i = \dd(p_i)$.   
So,
$$
\pi_1 +\ldots +\pi_\ell \, = \, \eta \,=\, \tau(\eta)\, = 
\, \tau(\pi_1) + \ldots + \tau(\pi_\ell).
$$
Because the $\pi_i$ and $\tau(\pi_i)$ are all part of the $\Z$-base of
the free abelian group $\DT$, we must have ${\pi_1= \tau(\pi_j)}$ for some 
index $j$.  Suppose $j>1$.  Then, $\pi_1 +\pi_j = \pi_1 + \tau(\pi_j) \in \OM$.
Let ${\xi' = \xi -(\pi_1 + \pi_j)}$. Then, $\xi' \equiv \xi\ (\modd \, \OM)$. But, 
$\xi'$ is an effective divisor with $\deg(\xi') <\deg(\xi)$.  This contradicts
the minimality of $\deg(\xi)$.  So, we must have $j = 1$, i.e., $\tau(\pi_1) = \pi_1$.  
If $\pi_1\in \OM$, then $\xi -\pi_1$ is an effective divisor in $\DTt \setminus \OM$
of degree less than $\deg(\xi)$.  Hence $\pi_1 \in \DTt\setminus \OM$, 
so in fact $\xi = \pi_1$ by the minimality of $\deg(\xi)$.  

To simplify notation, let $p = p_1$ and $\pi = \pi_1$.  Since $p$ and $\tau(p)$
are irreducible, we have 
$\pi = [T/Tp]$ and $\tau(\pi) = [T/T \tau(p)]$.  Hence, the equality 
$\pi = \tau(\pi)$ implies that $T/ Tp \cong T/T\tau(p)$.  Therefore, by~ 
\eqref{eq:iso}, there exist $f, g \in T\setminus\{0\} $ with $\deg(f) = 
\deg(g) <\deg(p)$ and
\begin{equation}\label{eq:pf}
pf\, = \, g\tau(p).
\end{equation}

Suppose first that $\tau(f) = g$.  Then, $pf = \tau(f) \tau(p) = \tau(pf)$, 
so $pf \in \Sigma_\tau(A)$, hence $\dd(pf)\in \OM$.  
Then, ${\dd(f) = \dd(pf) -\pi \equiv -\pi \,(\modd \, \OM)}$.  
Therefore, $\dd(f)$ is an effective divisor in $\DTt\setminus \OM$ with 
$$
\deg(\dd(f))  \, = \,  \deg(f)  \, < \,  \deg(p)  \, = \,  \deg(\pi)  \, =  
\, \deg(\xi).
$$
This contradicts the minimality of $\deg(\xi)$.  So, we must have $\tau(f) \ne g$.

Now suppose that $\tau(f) \ne -g$.  Then, let $f' = f+\tau(g) \in T\setminus\{0\}$,
and $g' = \tau(f') = \tau(f) + g$. By applying $\tau$ to \eqref{eq:pf}, we have
$p\tau(g) = \tau(f) \tau(p)$, which when added to \eqref{eq:pf} yields 
$pf' = g' \tau(p)$.   
But then the preceding argument for the case $\tau(f) = g$ applies for $f'$, 
as $f' \ne 0$ and $\tau(f') = g'$; it shows that $\dd(f') \in \DTt \setminus \OM$.
Since $\deg(f') \le \max\big(\deg(f), \deg(\tau(g)\big) = \deg(f) <\deg(\xi)$, this 
contradicts the minimality of $\deg(\xi)$.  

We thus have $\tau(f) = - g$ while $\tau(f) \ne g$.  Hence, $\charac(D) \ne 2$.  
 Since  we have assumed $\tau|_D \ne \id$, there is $c \in D^*$ with $\tau(c)
= -c$.  Then,
\begin{equation}\label{eq:pfc}
pfc\, = \, g\tau(p) c \, = \, c\inv cg\tau(p) c \, = \, c\inv \tau(pfc)  c.
\end{equation}
Since $\tau(c)  = -c$, 
 the map $\tau' = \int(c\inv)\circ \tau$ is 
a unitary involution on $A$ with $\tau'\sim \tau$.  Hence, by 
Lemma~\ref{lem:equivrem}(a),   $\Sigma_{\tau'}(A) = \Sigma_{\tau}(A)$.   
Since equation~\eqref{eq:pfc} shows $pfc = \tau'(pfc)$, we thus have\break 
${pfc\in \Sigma_{\tau'}(A) =\Sigma_\tau(A)}$, and hence $\dd(pfc) \in \OM$.  
This shows
that $\dd(fc) \equiv -\pi\ (\modd \, \OM)$, and hence 
$\dd(fc) \in\DTt \setminus \OM$.  But, $\dd(fc)$ is an effective divisor, 
with ${\deg(\dd(fc)) = \deg(f) < \deg(\pi) = \deg(\xi)}$.  Thus, we have a
contradiction to the minimality of $\deg(\xi)$.  The contradiction arose 
from the assumption that $\OM \subsetneqq \DTt$.  So, we must
have $\DTt = \OM =\dd\big(\Sigma_\tau(A)\big)$, as desired.

We have thus far assumed that $\tau|_D \ne \id$.  Now, suppose that $\tau_D = \id$.
Then, $D$ must be commutative, since $\tau$ is an antiautomorphism.  
Our hypothesis now is that $T$ is noncommutative; therefore,
 ${\sigma = \int(x)|_D \ne \id}$.  
Since, $\tau(Dx) = Dx$ there is a nonzero $d\in D$ with $\tau(dx) = \pm dx$.
Then let $\wi \tau = \int(dx) \circ \tau$, which is a unitary graded involution 
of $T$ with $\wi \tau \sim \tau$ as unitary involutions on $A$.  Hence, 
$\Sigma_{\wi\tau}(A) = \St A$ by Lemma~\ref{lem:equivrem}(a).  
Also, for any $a\in A^*$, 
$$
\wi\tau (\dd(a)) \, = \, \dd(\wi \tau(a)) \, = \, \dd(dx \, \tau(a)\hsp (dx)\inv) 
\, = \, \dd(dx) + \dd(\tau(a)) -\dd(dx) \, = \,\dd(\tau(a)) \, = \, \tau(\dd(a)). 
$$
Thus, $\wi \tau$ and $ \tau$ have the same action on $\DT$.  
Furthermore, $\wi \tau|_D = \int(dx)|_D = \sigma \ne \id$. Therefore, the preceding 
argument shows that the Lemma holds  for $\wi \tau$.  This yields for $\tau$,
$$
\dd\big(\St A\big) \, = \, \dd\big(\Sigma_{\wi\tau}(A)\big) \, = \, \DT^{\wi\tau}
\, = \, \DT^\tau.
$$  
Thus, the lemma holds in all cases.  
\end{proof}

\begin{remark}
The assumption that $\tau|_D \ne \id$ or  $T$ is noncommutative  is  
definitely needed for Lemma~\ref{lem:invDiv}.  For example, suppose that 
$T$ is commutative 
and $\tau$ is defined by $\tau|_D = \id$ and $\tau(x) = -x$.  Then, 
${\dd(x) \in \DT^\tau\setminus \dd(\St A)}$. 

\end{remark}

Now we combine the action of the graded unitary involution 
$\tau$ on $\DT$ with a finite group action.  Let $H$ be a finite
abelian group which acts on the set $\SSS$ of isomorphism classes
of simple left $T$-modules.  This action induces an action of 
$H$ on $\DT$, making $\DT$ into a permutation module for $H$.  There
is an associated norm map $N_H\colon \DT \to \DT$ given by 
$N_H(\alpha ) = \sum_{h\in H} h \alpha$.  Suppose the actions of
$H$ and $\tau$ on $\DT$ are related by
\begin{equation}\label{eq:tauvsh}
\tau(h\alpha) \, = \, h\inv \tau(\alpha)\qquad
\text{for all }\, h\in H, \, \alpha \in \DT. 
\end{equation}

\smallskip
\begin{lemma}\label{lem:NH}
\qquad$
\{\alpha\in \DT\mid N_H(\alpha)= N_H(\tau(\alpha))\} \, = \, 
I_H(\DT) + \sum\limits_{h\in H}\DT^{h\tau},
$\\[4pt]
\noindent where $I_H(\DT) = \big\langle h\alpha - \alpha\mid h \in H, \, 
\alpha \in \DT\big \rangle$
and $\DT^{h\tau} = \{ \alpha\in \DT \mid h\tau(\alpha) = \alpha\}$.   
\end{lemma}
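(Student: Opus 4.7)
The plan is to prove the two inclusions separately. For the easy direction $\supseteq$, I would note first that elements of $I_H(\DT)$ are killed by $N_H$ (since $h'\mapsto h'h$ permutes $H$), and the compatibility $\tau(h\alpha)=h^{-1}\tau(\alpha)$ forces $\tau(I_H(\DT))\subseteq I_H(\DT)$, so $N_H(\tau(\alpha))=0$ as well. For $\alpha\in\DT^{h\tau}$, the relation $h\tau(\alpha)=\alpha$ rearranges to $\tau(\alpha)=h^{-1}\alpha$, and then $H$-invariance of $N_H$ yields $N_H(\tau(\alpha))=N_H(\alpha)$. So both families lie in the left-hand side, hence so does their sum.

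For the nontrivial inclusion $\subseteq$, I would decompose $\DT=\bigoplus_\OO V_\OO$ as a direct sum over the $H$-orbits $\OO$ in $\SSS$, where $V_\OO=\bigoplus_{[S]\in\OO}\Z[S]$. The compatibility condition forces $\tau$ to permute the set of $H$-orbits: if $[S']=h[S]$, then $\tau([S'])=h^{-1}\tau([S])$ lies in the orbit of $\tau([S])$. So the $H$-orbits split into $\tau$-stable ones and pairs $\{\OO,\tau\OO\}$ exchanged by $\tau$, and both the condition $N_H(\alpha)=N_H(\tau(\alpha))$ and the right-hand side respect this coarser decomposition; it then suffices to work one $\tau$-class at a time. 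On $V_\OO$ the map $N_H$ is $\alpha\mapsto |H_0|\,s(\alpha)\,\Sigma_\OO$, where $s(\alpha)$ is the coefficient-sum on $\OO$, $\Sigma_\OO$ is the orbit sum, and $H_0$ is the common stabilizer of basis elements in $\OO$ (constant because $H$ is abelian). This reduces the LHS condition on a swapped pair to $s(\alpha_{\OO_1})=s(\alpha_{\OO_2})$, and makes it automatic on a $\tau$-stable orbit (since $\tau$ then permutes the basis of $\OO$).

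The key construction is then local and explicit. For a $\tau$-stable orbit $\OO$ and basis element $[S]\in\OO$, choose $h\in H$ with $h[S]=\tau([S])$; then $h^{-1}\tau([S])=[S]$, so $[S]\in\DT^{h^{-1}\tau}$, and since basis elements generate $V_\OO$, the whole summand lies in the right-hand side. For a swapped pair $(\OO_1,\OO_2)$ with basis elements $[S_i]\in\OO_i$, choose $h\in H$ with $h\tau([S_1])=[S_2]$; applying $\tau$ and using $\tau^2=\id$ yields $h\tau([S_2])=[S_1]$ as well, so $[S_1]+[S_2]\in\DT^{h\tau}$. Then any $(\alpha_1,\alpha_2)$ with $s(\alpha_1)=s(\alpha_2)=n$ is congruent modulo $I_H(V_{\OO_1})\oplus I_H(V_{\OO_2})$ to $n\cdot([S_1]+[S_2])$, hence lies in the right-hand side. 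The main obstacle I anticipate is organizational: tracking which $H$-orbits are $\tau$-stable versus swapped, selecting the correct group element $h$ so that each symmetric generator lands in some $\DT^{h\tau}$, and verifying that the $I_H$ term absorbs the coefficient-sum-zero residue cleanly.
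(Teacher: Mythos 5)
Your proof is correct, and the overall skeleton matches the paper's: both arguments decompose $\DT$ according to orbits and split into the two cases of a $\tau$-stable $H$-orbit versus a swapped pair (your \lq\lq$\tau$-classes" of $H$-orbits are exactly the orbits of the generalized dihedral group $G = H \rtimes \langle\tau\rangle$ that the paper works with). The treatment of the stable-orbit case is essentially identical: you both exhibit each basis element $[S]$ as lying in some $\DT^{h\tau}$, so the whole summand sits inside the right-hand side with no need to invoke the norm condition. Where you genuinely diverge is in the swapped-pair case. The paper introduces a sign-twisted $G$-action on $\DT$, observes that the left-hand side is the kernel of the twisted norm $\tNG(\alpha) = N_H(\alpha) - N_H(\tau\alpha)$, notes that on a swapped pair the twisted module is again a permutation $G$-module, and concludes from $\wh H^{-1}(G,\widetilde M) = 0$ that the kernel equals $I_G(\widetilde M)$, whose generators are then sorted into $I_H(\DT)$ and the $\DT^{h\tau}$. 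You instead compute $N_H$ explicitly on a transitive summand as $\alpha \mapsto \ORD{H_0}\, s(\alpha)\, \Sigma_{\OO}$ (using that $H$ is abelian so stabilizers along an orbit coincide), reduce the hypothesis to equality of coefficient sums, use that $I_H(V_\OO) = \ker(s)$ on a transitive permutation module, and produce the element $[S_1]+[S_2] \in \DT^{h\tau}$ by hand. Your route is more elementary and self-contained --- it needs no cohomological input --- at the cost of a little more explicit bookkeeping with stabilizers and coefficient sums; the paper's route is shorter once one accepts the vanishing of $\wh H^{-1}$ for permutation modules. All the steps you flag as needing verification (that $\tau$ permutes the $H$-orbits, that stabilizers match under $\tau$, that $h\tau[S_1]=[S_2]$ forces $h\tau[S_2]=[S_1]$, and that the augmentation kernel of a transitive permutation module is $I_H$) do check out.
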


\begin{proof}
Let $G$ be the semidirect product group $H \rtimes_\psi\langle \tau\rangle$
built using the homomorphism ${\psi \colon \langle \tau\rangle \to \Aut(H)}$ given by 
$\psi(\tau)(h) = h\inv$.  (This is well-defined as $H$ is abelian.) More explicitly,
$G = H\cup H\tau$ (disjoint union), where the multiplication  is determined by that 
in $H$ together with $\tau h = h\inv \tau$ and $\tau^2 = 1$.
Note that $G$ is a generalized dihedral group in the terminology of \cite[\S\,2.4]{hazw2}
since every element of $G\setminus H$ has order $2$.   
The hypothesis \eqref{eq:tauvsh} shows that the actions of $\tau$ and $H$ on 
$\DT$ combine to yield a $\Z$-linear group action of $G$ on $\DT$.  Also,
$G$ sends $\SSS$ to $\SSS$, since this is true for $H$ and $\tau$.  Thus,
$\DT$ is a permutation $G$-module.  From this group action 
we build a new action of~$G$ on~$\DT$, denoted~$*$, given by 
$$
h*\alpha \, = h\alpha\quad \text {and} \quad (h\tau) * \alpha = - h\tau \alpha\qquad
\text{for all } h\in H, \ \alpha \in \DT.
$$
Let $\widetilde{\DT}$ denote $\DT$ with this twisted $G$-action, 
and let $\widetilde {N_G} \colon \tDT \to\tDT$ be the associated norm map,
given by
$$
\tNG(\alpha) \, =\, \tsum\limits_{h\in H} h\alpha - \sum \limits_{h\in H}
h\tau\alpha\, = \, N_H(\alpha) - N_H(\tau\alpha)\qquad\text{for all } \alpha\in \DT.  
$$ 
Thus, the Lemma describes $\ker(\tNG)$.  

Note that while $\DT$ is a permutation $G$-module, the twisted $G$-module 
$\tDT$ need not be a permutation $G$-module, as the twisted action of 
$G$ does not map $\SSS$ to itself.  Write $\SSS = \bigcup_{j\in J}\OO_j$, 
where the $\OO_j$ are the distinct  $G$-orbits of $\SSS$ (for the original $G$-action).  
Then, $\DT = \bigoplus_{j\in J} M_j$, where each $M_j = \bigoplus_{s \in \OO_j}\Z s$, 
which is a $G$-submodule of $\DT$.  Let $\OO$ be one of the orbits, and let 
$M = \bigoplus_{s \in \OO}\Z s$.  Take any $s\in \OO$, and let
$\VV = H \cdot s$, which is an $H$-orbit within $\OO$.   
Then, $\tau\VV = \tau H\cdot s = H\tau \cdot s$, which is also an $H$-orbit in $\OO$,
with $\VV \cup \tau\VV = Hs\cup \tau H s = Gs = \OO$.  Let 
$\VV = \{s_1, \ldots, s_n\}$.  There are two possible cases:

Case I.  $\VV \cap \tau \VV = \varnothing$.  
Then, $\{s_1, \ldots, s_n , -\tau(s_1),
\ldots, -\tau(s_n)\}$ is a $\Z$-base of $M$ which is mapped to itself by the 
$*$-action of $G$.  Thus, $\widetilde M$ (i.e., $M$ with the twisted $G$-action)
is a permutation $G$-module.  Consequently, 
$\widehat H\inv(G, \widetilde M) = 0$ (as is true for all $G$-modules).  This means
that 
$$
\ker(\tNG)\cap M \, = \, I_G(\widetilde M) \, = 
\, \langle g*m - m\mid g\in G,\ m\in M\rangle
\, = \, \langle hm -m,\, - h\tau m - m\mid h\in H,\ m\in M\rangle.
$$
Each $hm - m \in I_H(M) \subseteq I_H(\DT)$, while 
$-h\tau m - m \in M^{h\tau}\subseteq \DT^{h\tau}$.  
Thus, 
$$
\ker(\tNG) \cap M  \ \subseteq \ I_H(\DT) + \tsum\limits_{h\in H}\DT^{h\tau}.
$$

Case II.  $\VV \cap \tau \VV \ne \varnothing$.  Then, $\VV = \tau \VV$, since they
are each $H$-orbits.  So, for each $t \in \VV$ there is $k\in H$ with 
$t = \tau k\inv  t = k\tau\, t $; so, $t\in M^{k\tau}$.  Thus,
$$
\ker(\tNG) \cap M \,\subseteq M \,= \,\tsum \limits_{h\in H}M^{h\tau}
\,\subseteq \,\tsum \limits_{h\in H}\DT^{h\tau}.
$$

By combining the two cases, we obtain
$$
\ker(\tNG) \, = \,\tsum_{j\in J}\big(\ker(\tNG)\cap M_j\big) 
\, \subseteq \,I_H(\DT) + \sum_{h\in H} \DT^{h\tau}.
$$
This proves $\subseteq$ in the Lemma, and the reverse inclusion is clear.
 \end{proof}

\bigskip

\section{Proof of the theorem}

We can now prove Theorem~\ref{th:main}.

\begin{proof}
Let $\grE$ be a graded division ring (finite-dimensional over its center) with 
a unitary graded involution $\tau$, and let $Q = q(\grE)$.
Because $\Nrd_Q|_\grE = \Nrd_\grE$, as noted in \S\,2, we have $\Stp \grE
\subseteq \Stp Q$. Also, clearly $\St \grE \subseteq \St Q$.  Therefore,
there is a canonical map $\varphi_\grE\colon \SK(\grE, \tau) \to \SK(Q, \tau)$.
We will show that $\varphi_\grE$ is an isomorphism. 

To see that $\varphi_\grE$ is injective, give $\Ga_\grE$ a total ordering making it 
into a totally ordered abelian group.  Then define a function $\lambda\colon
\grE \setminus\{0\}\to \grE^*$ as follows:  For $c\in \grE\setminus \{0\}$,
write $c = \sum _{\ga \in \Ga_\grE}c_\ga$ where each
$c_\ga \in \grE_\ga$.  Then set $\lambda(c) = c_\varepsilon$, where 
$\varepsilon$ is minimal such that $c_\varepsilon\ne 0$.  It is easy
to check that $\lambda(cd) = \lambda(c)\lambda(d)$ for all $c,d\in 
\grE \setminus\{0\}$. It follows that $\lambda$ extends to a well-defined
group epimorphism $\lambda \colon Q^* \to \grE^*$ given by 
$\lambda(cz\inv) = \lambda(c) \lambda(z)\inv$ for all $c\in \grE \setminus\{0\}$,
$z\in Z(\grE) \setminus\{0\}$.  Clearly $\lambda|_{\grE^*} = \id$.  Note that as 
$\tau$ is a graded involution, we have $\lambda(\tau(c)) = \tau(\lambda(c))$
for all $c\in \grE \setminus\{0\}$, and hence for all $c\in Q^*$.  Now take any 
$u \in \St Q \cap \grE^*$.  Then, $u = s_1\ldots s_m$ for some $s_i\in Q^*$ with 
each $\tau(s_i) = s_i$. Then, $u = \lambda(u) = \lambda(s_1) \ldots \lambda(s_m)$
with $\tau(\lambda(s_i)) = \lambda(\tau(s_i)) = \lambda(s_i)$; so, $u \in \St \grE$.
This shows that $\St Q \cap \grE^* \subseteq \St\grE$.  Therefore, the canonical map 
$\varphi_\grE$ is injective.
 
To show that $\varphi_\grE$ is surjective, we will prove the following:
\begin{equation}\label{claim}
\Stp Q \, = \, \big( \Stp Q \cap\grE_0^*\big) \cdot \St Q
\end{equation}
Note that surjectivity of $\varphi_\grE$ follows immediately from \eqref{claim}
because $\Stp Q \cap \grE_0^* \subseteq \Stp \grE$.   

We next reduce to the case of a finitely-generated grade group.  For any graded 
division algebra~$\grE$, observe that if  
$\Delta$ is a subgroup of~$\Ga_\grE$, then $\grE$ has the  graded division subalgebra
$\grE_\Delta = \bigoplus_{ \varepsilon \in \Delta}\grE_\varepsilon$, which is 
finite-dimensional over its center with $\Ga_{\grE_\Delta} = \Delta$. 
 Let $Q_\Delta = q(\grE_\Delta) \subseteq Q$. Take any $\ga \in \Gamma_{Z(\grE)}$ 
such that $\tau|_{Z(\grE)_\ga} \ne \id$ and any $c_1, \ldots, c_m \in \grE^*$
such that the $c_i$ span $\grE$ as a graded $Z(\grE)$-vector space. Then, 
the $c_i$ also span $Q$ as a $Z(Q)$-vector space. So, if $\Delta$ is any 
finitely-generated subgroup of $\Gamma_\grE$ such that $\ga \in \Delta$ and
$\deg(c_1), \ldots, \deg(c_m)\in \Delta$, then $\tau|_{\grE_\Delta}$ is a unitary 
graded involution and  the map $Q_\Delta \otimes_{Z(Q_\Delta)} Z(Q) \to Q$
is surjective.  This map is also injective, as its domain is 
simple since $Q_\Delta$ is central simple and finite-dimensional over 
$Z(Q_\Delta)$.  Therefore, 
$\DIM {Q_\Delta}{Z(Q_\Delta)} = \DIM Q{Z(Q)}$, and hence $\Nrd_{Q_\Delta} = 
\Nrd_Q|_{Q_\Delta}$, so $\Stp{Q_\Delta} = \Stp Q \cap Q_\Delta^*$.
Now, $\grE = \varinjlim \grE_\Delta$ and  $\Stp Q = \varinjlim \Stp{Q_\Delta}$ 
as $\Delta$ ranges over   finitely-generated subgroups of $\Ga_\grE$
containing $\gamma$ and $\deg(c_1), \ldots, \deg(c_m)$;  therefore, 
equality \eqref{claim} holds for $\grE$ if it holds for each $\grE_\Delta$.  
Thus, it suffices to prove \eqref{claim} for $\grE$ with $\Ga_\grE$ 
finitely-generated. 

Henceforward, suppose $\Ga_\grE$ is a finitely generated, hence free,
abelian group, say $\Ga_\grE = \Z \ga_1 \oplus \ldots \oplus \Z\ga_n$.
Take any nonzero $x_i \in \grE_{\ga_i}$ for $i = 1, 2, \ldots, n$.  
Then, $\grE = \grE_0[x_1, x_1\inv, \ldots. x_n, x_n\inv]$, an iterated
twisted Laurent polynomial ring over the division ring $\grE_0$.
We prove \eqref{claim} by induction on $n$. 

Suppose first that $n = 1$.  Let $D= \grE_0$ and $x= x_1$, and let $T = 
D[x]$, which is a graded  subring of $\grE$.  So, $T$ is the twisted
polynomial ring $D[x;\sigma]$, where $\sigma =\int(x)|_D$.  Then, 
$\sigma|_{Z(D)}$ has finite order since $\DIM{Z(D)}{Z(\grE)_0}<\infty$.  
To be consistent with the notation of \S\,3, let $A = q(T) = q(\grE) = Q$.   
Since $\tau$ is a graded unitary involution on $\grE$, its restriction 
$\tau|_T$ is a graded
unitary involution on $T$.  Let $\DT$ and $\delta \colon A^* \to \DT$ be 
as in \S\,3.  If $T$ is commutative, then $A$ is also commutative, 
so $\SK(A) = 1$ and \eqref{claim} holds trivially.  Thus, we may assume that
 $T$ is noncommuative.  Take any  
$a\in \Stp A$.  So, $\Nrd_A(a) = \tau(\Nrd_A(a)) = \Nrd_A(\tau(a))$ by
\eqref{Nrdcompat}. 
Recall from \cite[Remark~5.1(iv)]{hazw1} that $\dd(\Nrd_A(a)) = r\dd(a)$, 
where $r = \ind(A)$.  Hence,
$$
r\,\tau(\dd(a)) \, = \, r \,\dd(\tau(a)) \, = \, \dd(\Nrd_A(\tau(a))
\, = \, \dd(\Nrd_A(a)) \, = \, r\,\dd(a).
$$
Then, $\tau(\dd(a)) = \dd(a)$, as $\DT$ is torsion-free.  By 
Lemma~\ref{lem:invDiv}, we have $\dd(a) = \dd(b)$ for some 
$b\in \St A$.  It then follows from the exact sequence
\eqref{seq:Div} that $a = bcd$ for some $c\in [A^*, A^*]$ and 
$d\in D^*$.  Recall that $[A^*,A^*] \subseteq \St A$ by  
Lemma~\ref{lem:equivrem}(b). So,    
$bc\in \St A \subseteq \Stp A$.  Hence, $d = (bc)\inv a \in \Stp A
\cap D^*$, so $a = (bc) d \in \St A (\Stp A \cap D^*)$.  
This proves the inclusion $\subseteq$ in \eqref{claim}. 
The reverse inclusion is clear.
  Thus, 
\eqref{claim} holds  when $n = 1$. 

Now assume $n>1$.  By induction \eqref{claim} holds for all graded division algebras 
with grade group of rank less than $n$. Let $x = x_1$ as above, and let 
\begin{align*}
\grZ \,&= \, Z(\grE); \qquad\qquad\qquad\qquad\qquad\qquad\qquad\qquad\qquad
\qquad\qquad\qquad\qquad\qquad\qquad\\
D\, &= \,\grE_0;\\
T\,&= \, D[x];\\
R\,&= \, Z(T);\\
A\,&= \, q(T);\\
\grB\,&= \, A[x_2,x_2\inv, \ldots, x_n, x_n\inv] \, \subseteq \, Q;\\
\grC\, &= \, Z(\grB).
\end{align*}
For a clearer picture of $\grB$ and $\grC$, let $\grU = \grE_0[x_1, x_1\inv]$, which 
is a graded division subalgebra of $\grE$ with $\grU_0 = \grE_0$, 
$\Ga_\grU = \Z\gamma_1$, and $q(\grU) = A$. Let $\grF = \grU \cap \grZ$, which is a graded field 
with $\grF_0 = \grZ_0$ and $\Ga_\grF = \Ga_\grZ \cap \Ga_ \grU$.  So, 
$$
\IND{\Ga_\grU}{\Ga_\grF}\, = \, \IND{\Ga_\grE \cap\Z\gamma_1\,}
{\, \Ga_\grZ \cap \Z\ga_1} \, \le \, \IND{\Ga_\grE}{\Ga_\grZ} \, < \infty,
$$
and hence $\DIM \grU\grF = \DIM {\grU_0}{\grF_0}\, \IND{\Ga_\grU}{\Ga_\grF}
= \DIM {\grE_0}{\grZ_0}\, \IND{\Ga_\grU}{\Ga_\grF} <\infty$.  Since $\grF$
is central in $\grU$, we have\break
 ${A = q(\grU) = q(\grF) \otimes _\grF \grU}$.
Hence, $\grB = q(\grF) \otimes_\grF \grE$, so $\grC = Z(\grB) = 
q(\grF) \otimes_\grF \grZ$, from which it is clear that the rank of $\grB$
as a free $\grC$-module equals $\DIM\grE \grZ$, which is finite.  Let 
$\grE' = \grU[x_2, x_2\inv, \ldots, x_n , x_n\inv] = \grE$, but we
grade~$\grE'$ so that $\grE'_0 = \grU$, $\Ga_{\grE'} = \Z\gamma_2\oplus \ldots
\oplus \Z\ga_n$, and 
$\grE'_ \varepsilon = \bigoplus_{j\in \Z}E_{j\ga_1+\varepsilon}$,
for each $\varepsilon \in \Ga_{\grE'}$. Now, $\grB$~is obtained from
$\grE'$ by inverting nonzero elements of $\grF$, which are all central
in $\grE'$ and homogeneous of degree $0$.  So, the grading on $\grE'$
extends to a grading on $\grB$ with $\grB_0  = A$ and 
$\Ga_\grB = \Ga_{\grE'} = \Z\ga_2 \oplus \ldots \oplus \Z \ga_n$; 
this grading restricts to a grading on $\grC = Z(\grB)$.
Note that $\grB$ is a graded division ring since $\grB_0 = A$ is a 
division ring and each homogeneous component $\grB_\varepsilon$ of 
$\grB$ contains a unit
of $\grB$ (namely any nonzero homogeneous element of $\grE$ in 
$\grB_\varepsilon$).    We saw above that $\DIM\grB \grC< \infty$.
Also, $q(\grB) = Q$, as $\grE \subseteq \grB \subseteq Q = q(\grE)$.
Our unitary involution~$\tau$ is a graded involution for $\grE$, 
so also a graded involution for $\grE'$, hence also a graded involution
for $\grB$.  Moreover, $\tau$ is a unitary involution for $\grB$,
as $\tau|_\grZ \ne \id$ and $\grZ = Z(\grE) \subseteq \grB \cap Z(Q) 
\subseteq  Z(\grB)$.  Since $\Ga_\grB$ has $\Z$-rank $n-1$, 
\eqref{claim} holds for $\tau$ on $\grB$ by induction,  
i.e.
$$
\Stp Q \, = \, \big( \Stp Q \cap A^*\big)\cdot \St Q. 
$$  
Therefore, to prove
\eqref{claim} for $\grE$, it suffices to prove 
\begin{equation}\label{newclaim}
\Stp Q \cap A^* \, \subseteq \big( \Stp q \cap D^*\big ) \cdot \St Q.
\end{equation}
For this, we will work with $\DT$ and the action on it by $\tau$ and 
various automorphisms.  Note that while $\tau$ is unitary as an involution 
on $\grB$, its restriction to $A$ need not be unitary.  But, if $\tau|_A$ 
is not unitary, then $\tau|_{Z(A)} = \id$, so $\tau|_{\grC_0} = \id$, hence
 $\grC$ is totally ramified over $\grC^\tau$, i.e., $\grC_0 = (\grC^\tau)_0$.  
But then $\SK(\grB, \tau) = 1$
by \cite[Th.~4.5]{hazw2}.  So, $\SK(Q,\tau)= \SK(B,\tau) = 1$ since \eqref{claim}
holds for $\grB$, whence \eqref{newclaim} and \eqref{claim} for $\grE$ obviously hold.  
Thus, we will assume from now on that $\tau|_A$ is unitary.

Assume  that $T$ is not commutative.    

Since $\grB$ is a  graded division algebra and $\grC = Z(\grB)$, it is known
(see \S\,2)
that $Z(\grB_0)$ is Galois over $\grC_0$, and there is a well-defined
epimorphism $\theta_\grB \colon \Gal(Z(\grB_0)/ \grC_0)$ 
given by $\varepsilon \mapsto \int(b_\varepsilon)|_{Z(\grB_0)}$ for any 
nonzero $b_\varepsilon$ in $\grB_\varepsilon$.  Here, $\grB_0 = A$.
Let $H = \Gal(Z(A) /\grC_0)$, which is abelian, as $\theta_\grB$ is surjective.  
We next define a group action of $H$ on $\DT$ so as to be able to use
Lemma~\ref{lem:NH}.

We use the notation associated to $T$ and $\DT$ as in \S\,3, including the 
epimormphism 
${\dd\colon A^* \to \DT}$.
Observe that conjugation by an element of $\grE^*$ yields a degree preserving 
automorphism of  $T$, and hence a well-defined automorphism of $\DT$.  That is, 
for any $y\in \grE^*$ and $a\in A^*$, define 
\begin{equation}\label{Eaction}
y\cdot \dd(a) \, = \dd(yay\inv).  
\end{equation}
This is well-defined because $\ker(\dd) = [A^*, A^*] D^*$ (see \eqref{seq:Div}),
which $\int(y)$ maps to itself.  For any irreducible $p$ of the twisted polynomial
ring $T$, its conjugate $y p y\inv$ is also irreducible in $T$, and ${y\cdot [T/Tp] = y\cdot \dd (p)
= [T/Typy\inv]}$.  Hence,  the action of $y$ maps the distinguished $\Z$-base 
$\SSS$ of $\DT$ to itself.  Therefore, the group action of $\grE^*$ on 
$\DT$ given by \eqref{Eaction} makes $\DT$ into a permutation $\grE^*$-module.
Note that if $u\in \grE^*$ and $\int(u) |_{Z(A)} = \id$, then by Skolem-Noether there is 
$c\in A^*$ with 
$\int(u)|_A = \int(c)$.  Then, for $a\in A$, 
\begin{equation}\label{conjbyu}
u\cdot \dd(a) \, = \, \dd(cac\inv) \, = \, \dd(c) +\dd(a) - \dd(c) \, = \, \dd(a),
\end{equation}
so $u$ acts trivially on $\DT$.  Now, for any $y\in \grE^*$, we have 
$\tau\circ \int(y) \circ\tau\circ \int(y) = \int(\tau(y) \inv y)$.
Since $\tau$ preserves degrees, we have $\tau(y) \inv y \in \grE_0 = D \subseteq A$,
hence $\int(\tau(y) \inv y)|_{Z(A)} = \id$.
Therefore, \eqref{conjbyu} with   $u = \tau(y)\inv y$ shows that 
$\tau y\tau y \cdot \alpha = \alpha$ for all $\alpha \in \DT$, i.e., 
\begin{equation}\label{ytauaction}
\tau y \cdot \alpha\, = \, y\inv \tau \cdot \alpha. 
\end{equation} 
We obtain from the $\grE^*$-action an induced action of $H$ on $\DT$:
For any $h \in H$, choose any $y_h \in \grE^*$ such that  $\int(y_h)|_{Z(A)} = h$.  
(Such a $y_h$ exists because the composition $\grE^* \to \grB^* \to \Ga_\grB$ is 
surjective, as is $\theta_\grB\colon \Ga_\grB \to H$.) Then, for $\alpha \in \DT$, 
set $h \cdot \alpha = y_h \cdot \alpha$.  If we had made a different choice of $y_h$,
say~$y_h'$, then as $\int(y_h^{\prime -1} y_h)|_{Z(A)} = h\inv h= \id$, the calculation 
in \eqref{conjbyu} shows that $y_h'\cdot \alpha = y_h\cdot \alpha$ for all 
$\alpha \in \DT$.  Thus, the action of $H$ on $\DT$ is well-defined, independent of 
the choice of the $y_h$.  So, $\DT$ is a permutation $H$-module, and from 
\eqref{ytauaction}, we have $\tau h\cdot \alpha = h\inv \tau\cdot \alpha$ for all
$h\in H$ and $\alpha \in \DT$.  Therefore, Lemma~\ref{lem:NH} applies for the 
norm map $N_H$ of $H$ on $\DT$.   We interpret the terms appearing in that lemma:
First,
\begin{equation}\label{IH}
I_H(\DT) \, = \, \dd\big([\grE^*, A^*]\big).
\end{equation}
For, to see $\subseteq$, take any generator $h\cdot \beta - \beta$ of $I_H(\DT)$,
and take any $b \in A^*$ with $\dd(b) = \beta$.  Then,
$$
h\cdot \beta -\beta \, = \, \dd(y_h b y_h\inv) - \dd(b) \, = \,\dd(y_hby_h\inv b\inv)
\,\in\, \dd\big([\grE^*,A^*]\big).  
$$
For the reverse inclusion, take any $z\in \grE^*$  and $a\in A^*$.
Let $h = \int(z)|_{Z(A)}$.  Since we could have chosen~$z$ for $y_h$, we have 
$$
\dd(zaz\inv a \inv) \, = \, \dd(zaz\inv) - \dd(a) \, = \, h\cdot \dd(a) - \dd(a)
\, \in \, I_H(\DT). 
$$
This proves \eqref{IH}.

For any $h\in H$, we have selected $y_h \in \grE^*$ so that 
$\int(y_h)|_{Z(A)} = h$.  For any $d\in D = \grE_0$, we have 
$\int(dy_h)|_{Z(A)}= \int(y_h)|_{Z(A)}$. Since $\grE_{\deg(y_h)} = 
\grE_0 y_h$, the graded involution $\tau$ maps $\grE_0y_h$ to itself.
Hence, there is $d_h\in \grE_0$ with $\tau(d_hy_h) =\pm d_h y_h$. 
Then, replace $y_h$ by $d_h y_h$, and  set $\tau_h = \int(y_h) \circ \tau$.  
Since $\tau(y_h) = \pm y_h$, the map $\tau_h$ is a  unitary graded involution on
$\grE$ with $\tau_h \sim \tau$ on $\grE$.  So, $\tau_h$ restricts to a 
graded involution on $T$.  If $\tau_h|_T$ is not unitary, then $\tau_h|_A$ is not 
unitary, so $\SK(Q, \tau_h) = 1$ by the same argument as given above for $\tau$.
But, $\SK(Q, \tau_h) = \SK(Q, \tau)$ as $\tau_h\sim \tau$ by 
Lemma~\ref{lem:equivrem}(a), and the triviality  
of $\SK(Q,\tau)$ implies \eqref{newclaim} and \eqref{claim}.  Thus, we
are done if any $\tau_h|_A$ is not unitary.  We therefore assume that each $\tau_h|_A$ is
unitary, so  $\tau_h|_T$ is a graded unitary involution.  
 Since the action of~ 
$\tau_h$ on $\DT$ coincides with that of $h\tau$, Lemma~\ref{lem:invDiv}
yields
\begin{equation}\label{Divhtau}
\DT^{h\tau}\, = \, \dd\big(\Sigma_{\tau_h}(A)\big)\qquad \text{for each }\, h\in H.
\end{equation}
(The Lemma applies because we are assuming that $T$ is noncommutative.) 


We  now prove \eqref{newclaim}.  For this, take any $a\in \Stp Q \cap A^*$, 
and let $\alpha = \dd(a)$.
 Since $A = \grB_0$ and $Q = q(\grB)$, the norm formula \eqref{normformula} yields 
$$
\Nrd_Q(a) \, = \, N_{Z(A) /\grC_0}(\Nrd_A(a))^\lambda\qquad \text{where}\ \, 
\lambda = \ind(Q) \big/ \big(\DIM{Z(A)}{\grC_0} \cdot \ind(A)\big).
$$
Note that $N_{Z(A) /\grC_0}$ is just the norm map $N_H$ for the $H$-module $Z(A)$.
Here, $\DT$ and $\Div(R)$ are also $H$-modules, and the maps $\dd_R\colon Z(A) 
\to \Div(R)$ and $\DNrd\colon \DT \to \Div(R)$ in diagram~\eqref{DNrddiag}
are $H$-equivariant.  This 
follows from Lemma~\ref{auto} since for $h\in H$ the action of $h$ on $Z(A)$, $\Div(R)$,
and $\DT$ is induced via $\int(y_h)$, which is a graded automorphism of $T$.  
Therefore, $\dd_R$ and $\DNrd$ commute with the norm maps $N_H$ on 
$Z(A)$, $\Div(R)$, and $\DT$.
Thus, for $\alpha = \dd(a)$ we have, using commutative diagram \eqref{DNrddiag},
\begin{align*}
\dd_R(\Nrd_Q(a)) \, &= \, \dd_R(N_{Z(A)/\grC_0}(\Nrd_A(a))^\lambda
\, = \, \lambda \dd_R N_H(\Nrd_A(a))  \, = \,
\lambda N_H \dd_R(\Nrd_A(a))\\
&= \, \lambda N_H(\DNrd\dd(a)) \, = \, 
\lambda \DNrd( N_H(\alpha)).
\end{align*}
Since $a\in \Stp Q$, we have $\Nrd_Q(a) = \Nrd_Q(\tau(a))$. Thus,
\begin{equation}\label{DNrdNH}
\lambda\DNrd (N_H(\alpha)) \, = \, \dd_R(N_Q(a)) \, = \dd_R(N_Q(\tau(a))
\, = \, \lambda \DNrd(N_H(\tau(\alpha))).
\end{equation} 
But, $\Div(R)$ is a torsion-free $\Z$-module, and $\DNrd$ is injective by 
\cite[Prop.~5.4]{hazw1}.   Therefore, equation~\eqref{DNrdNH} yields $N_H(\alpha)
= N_H(\tau(\alpha))$.  By Lemma~\ref{lem:NH} and \eqref{IH} and \eqref{Divhtau},
we thus have 
$$
\alpha\, \in \, I_H(\DT) + \tsum \limits_{h\in H}\DT^{h\tau} \ = \ 
\dd\big([\grE^*, A^*]\big) + \tsum \limits_{h\in H}\dd\big( \Sigma_{\tau_h}(A)\big).
$$
Hence, there exist $b\in [\grE^*, A^*]$ and $c_h \in \Sigma_{\tau_h}(A)$ for each $h\in H$
with 
$$
\dd(a) \,=\,\, \dd(b) + \tsum\limits_{h\in H}\dd(c_h) \,=\,\, 
\dd(b\prod\limits_{h\in H}c_h).
$$
  Then, as $\ker(\dd) = [A^*,A^*]D^*$, there exist $d\in D^*$ and $e\in [A^*,A^*]$
such that $a = deb\prod_{h\in H}c_h$. Let $q= eb\prod_{h\in H}c_h$.
Since $[A^*,A^*] \subseteq \St Q$ by 
Lemma~\ref{lem:equivrem}(b), 
we have $eb\in \St Q$. But also, each ${c_h \in \Sigma_{\tau_h}(A)
\subseteq \Sigma_{\tau_h} (Q) = \St Q}$, with the last equality given by 
Lemma~\ref{lem:equivrem}(a), 
since $\tau_h \sim \tau$ as involutions on $Q$.  Hence, 
${q\in \St Q \subseteq\Stp Q}$.  So, ${d = aq\inv \in D^* \cap \Stp Q}$, whence 
${a \in \big( \Stp Q \cap D^*\big) \cdot \St Q}$. This proves \eqref{newclaim},
under the assumption that $T$ is noncommutative.

There remains the case where $T$ is commutative.  
Suppose that $\int(x_j)|_D \ne \id$ for some $j >1$.  Then, we can rearrange
the order of the $x_i$, viewing the order of generators of $\grE$ as 
$x_j, x_2, \ldots, x_{j-1}, x_1, x_{j+1}, \ldots, x_n$  (and their inverses).  
Then, we have $T = D[x_j]$, which is not commutative. So, we 
 are back to the previously proved  case; hence, \eqref{newclaim}  
holds.   

The final possibility is that each $D[x_j]$ is commutative for $j = 1,2, \ldots, n$,
 i.e., $D \subseteq Z(\grE)$.  So, for each $h \in H$,
we have $\tau_h|_D = \int(y_h)|_D \circ \tau|_D = \tau|_D$. 
In the basic argument for \eqref{newclaim}, we used the assumption that 
$T$ is commutative only to be able to apply Lemma~\ref{lem:invDiv} for each
$\tau_h$.  Now $T$ is commutative, but if $\tau|_D \ne \id$ then also each
$\tau_h|_D \ne \id$, so Lemma~\ref{lem:invDiv} still applies for each $\tau_h$;
then the preceding argument for \eqref{newclaim} again goes through.
Thus, we may assume that $\tau|_D = \id$. (Hence, $\grE_0= (Z(\grE^\tau))_0$, 
which implies that $\SK(\grE, \tau) = 1$ by \cite[Th.~4.5]{hazw2}. 
But, we are not finished because we do not know that $\SK(Q, \tau) = 1$.)
For any $y\in \grE^*$, we have $\tau(y) = cy$ for some $c\in \grE_0^* = D^*$,
as each $\grE_\gamma$ is a $\tau$-stable $1$-dimensional $\grE_0$-vector space.
Then $y = \tau(cy) = c^2y$, so $c^2 = 1$.  Thus, $\tau(y) = \pm y$ for each 
$y\in \grE^*$.  If $\tau(x_1) = 1$, then $\tau|_A = \id$, which we saw earlier 
implies $\SK(Q, \tau) = \SK(\grB, \tau) = 1$, which trivially implies \eqref{newclaim}.
Thus, we may assume that $\tau(x_1) = -x_1$.  Likewise, we may assume that 
$\tau(x_j) = -x_j$ for each $j$.  For, if $\tau(x_j) = x_j$, then after reordering the generators of $\grE$
by interchanging $x_1$ and $x_j$,  we have $A = D(x_j)$, so again $\tau|_A = \id$
  and $\SK(Q, \tau) = 1$.  Similarly, we are done if $\tau(x_1x_2) = x_1x_2$. For,
 we can then replace $x_1$ by $x_1 x_2$ as a generator of $\grE$, as 
$\Ga_\grE = \Z(\ga_1+\ga_2) \oplus \Z \ga_2 \oplus \ldots \oplus \Z \ga_n$.  
Then $A = D(x_1x_2)$ and $\tau|_A = \id$, showing $\SK(Q, \tau) = 1$, as before.
Thus, we may assume that $\tau(x_1x_2) = -x_1x_2$.  Then, 
$x_2x_1 = (-1)^2 \tau(x_1 x_2) = - x_1 x_2$.  Likewise, by reordering the $x_i$,
we are done if $\tau(x_i x_j) = x_i x_j$ for any distinct $i$ and $j$.  So, 
we may assume that $\tau(x_i x_j) = -x_i x_j$, and hence $x_j x_i = - x_i x_j$, 
whenever $i \ne j$.  If~$n \ge 3$, we then find that $\tau(x_1x_2x_3) = x_1 x_2 x_3$.  
In this case, we can replace $x_1$ by $x_1x_2 x_3$ as a generator of $\grE$, 
as $\Ga_\grE = \Z (\ga_1 + \ga_2 + \ga_3) \oplus \Z \ga_2 \oplus \ldots \oplus \Z\ga_n$.
Then, $A = D(x_1x_2 x_3)$, and $\tau|_A = \id$, so $\SK(Q,\tau) = 1$, as before.  
The remaining case is that $n = 2$, $D$ is central in $\grE$, $\tau|_D = \id$, $\tau(x_i) = -x_i$
for $i = 1,2$, and $x_2 x_1 = - x_1 x_2$.  But then, $Z(\grE) = D[x_1^2,x_2^2]$ and 
$\tau|_{Z(\grE)} = \id$, which is a contradiction as $\tau$ is unitary.  Thus,
in all cases that can occur \eqref{newclaim} holds, so also   \eqref{claim}. 
Hence, $\SK(Q,\tau) = \SK(\grE, \tau)$, as desired.
\end{proof}
 
Here is a quick consequence of the Theorem:

\begin{corollary}[Stability Theorem \cite{y1var}]  Let $D$ be a division algebra 
finite-dimensional over $Z(D)$, and let $\tau$ be a unitary involution on $D$.  
Then, $\SK(D, \tau) \cong \SK(D(x),\tau')$, where  $D(x)$ is the rational division
algebra over $D$ and $\tau'$ is the canonical extension of $\tau$
to $D(x)$, with $\tau'(x) = x$. 
\end{corollary}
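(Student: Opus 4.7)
The plan is to realize $D(x)$ as the quotient division ring of a suitable graded division algebra so that Theorem~\ref{th:main} applies, and then to identify $\SK$ of the graded algebra with $\SK(D,\tau)$ by a direct computation exploiting homogeneity.

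First I would set $\grE = D[x, x\inv]$, the untwisted Laurent polynomial ring, viewed as $\Z$-graded with $\grE_n = Dx^n$. Then $\grE$ is a graded division ring with torsion-free grade group $\Z$, finite-dimensional over its center $\grZ = Z(D)[x, x\inv]$, and $q(\grE) = D(x)$. Because $\tau'(x) = x$ and $\tau'(D) = D$, the involution $\tau'$ restricts to a degree-preserving involution on $\grE$; it is unitary since $\tau'|_{Z(D(x))}$ agrees with $\tau$ on $Z(D)$ and is therefore nontrivial. Theorem~\ref{th:main} then gives
$$
\SK(\grE, \tau') \,\cong\, \SK(D(x), \tau').
$$

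Next I would show $\SK(\grE,\tau') \cong \SK(D,\tau)$ by explicit computation. Since $\Ga_\grE = \Z$ is torsion-free, every unit of $\grE$ is homogeneous, hence of the form $a x^n$ with $a \in D^*$ and $n \in \Z$; moreover $x$ is central in $\grE$ and satisfies $\tau'(x) = x$. Because $\grZ_0 = Z(D) = Z(\grE_0)$ and $\ind(\grE) = \ind(D)$, the exponent $\lambda$ in the norm formula~\eqref{normformula} equals $1$; consequently $\Nrd_\grE|_D = \Nrd_D$ and $\Nrd_\grE(x) = x^{\ind(D)}$, so
$$
\Nrd_\grE(a x^n) \, = \, \Nrd_D(a)\cdot x^{n\ind(D)}.
$$
As $\tau'$ fixes $x$, the condition that this norm be symmetric reduces to $\Nrd_D(a)$ being symmetric, and so $\Stp\grE = \Stp D\cdot\langle x\rangle$. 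Similarly, a homogeneous unit $ax^n$ is $\tau'$-symmetric iff $a$ is $\tau$-symmetric in $D$, and since $x$ itself lies in $\St\grE$, one obtains $\St\grE = \St D\cdot\langle x\rangle$. The inclusion $\Stp D\hookrightarrow \Stp\grE$ then induces a homomorphism $\SK(D,\tau)\to\SK(\grE,\tau')$ which is surjective (because $\langle x\rangle\subseteq \St\grE$) and injective (because $D\cap (\St D\cdot\langle x\rangle) = \St D$, by comparing $x$-degrees). Combining with the previous step yields the corollary.

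I do not expect a substantial obstacle: the whole argument is a formal reduction to Theorem~\ref{th:main}, and the identification $\SK(\grE,\tau')\cong \SK(D,\tau)$ is essentially bookkeeping once one uses that the units of $\grE$ are all homogeneous. The only point requiring a moment's care is the simplification $\lambda = 1$ in~\eqref{normformula}, ensuring that $\Nrd_\grE$ restricts to $\Nrd_D$ on $D$ and sends $x$ to $x^{\ind(D)}$.
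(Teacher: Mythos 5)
Your proposal is correct and follows essentially the same route as the paper: realize $D(x)$ as $q(D[x,x\inv])$, apply Theorem~\ref{th:main}, and then identify $\SK(\grE,\tau')$ with $\SK(D,\tau)$ by the explicit description of the homogeneous units and the computation $\Nrd_\grE(ax^n)=\Nrd_D(a)x^{n\ind(D)}$ (the paper performs exactly this verification, noting it is a special case of \cite[Cor.~4.10]{hazw2}). Your extra care about $\lambda=1$ in \eqref{normformula} is a correct and harmless elaboration of the same point.
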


\begin{proof} 
Let $\grE = D[x, x\inv]$, the (untwisted) Laurent polynomial ring in one variable 
over $D$.  Then, $\grE$~is a graded division algebra with $\grE_0 = D$, 
$\Ga_\grE = \Z$, and $\grE_j = Dx^j$ for all $j \in \Z$.  Also, ${q(\grE) = D(x)}$
and ${Z(\grE) = Z(D)[x,x\inv]}$, so $\DIM\grE{Z(\grE)} = \DIM D {Z(D)} < \infty$.
The extension $\tau'$ of $\tau$ to $D(x)$ clearly restricts to a unitary graded 
involution on $\grE$.   Note that $\grE^* = \bigcup_{j\in \Z} D^* x^j$.  For 
$d\in D^*$ and $j \in \Z$, we have ${\tau'(dx^j) = \tau(d)x^j}$ and $\Nrd_\grE(dx^j)
= \Nrd_D(d)x^{mj}$, where $m = \ind(D(x)) = \ind(D)$.  Hence, ${\Sigma_{\tau'}'(\grE)
= \bigcup_{j\in \Z}\Stp D \,x^j}$ and $\Sigma_{\tau'}(\grE) = 
\bigcup_{j\in \Z}\St D\,x^j$, showing that $\SK(\grE, \tau') \cong \SK(D,\tau)$.
(This is a special case of the result in \cite[Cor.~4.10]{hazw2} that if $\grE$ is 
a graded 
division algebra with unitary graded involution $\tau$, and 
$\Ga_\grE = \Ga_{Z(\grE)^\tau}$, 
then $\SK(\grE, \tau) \cong \SK\big(\grE_0, \tau|_{\grE_0}\big)$.)  
Since
${\SK(\grE, \tau') \cong \SK(D(x), \tau')}$ by~Th.~\ref{th:main}, we have
$\SK(D(x), \tau') \cong \SK(D, \tau)$. 
\end{proof}
  
\bigskip 
\vfill\eject   

\section{Examples}

Here are a few examples of $\SK(Q, \tau)$ that follow from known results about 
$\SK(\grE, \tau)$.

For a field $K$ containing a primitive $n$-th root of unity $\omega$ ($n\ge 2$)
and any $a, b\in K^*$, let $\big(\frac{a,b}K\big)_\omega$
denote the degree $n$ symbol algebra over $K$ with generators $i, j$ and relations
$i^n = a$, $j^n = b$, and $ij = \omega ji$.  Note that if $K$ has a nonidentity automorphism 
$\eta$ such that $\eta^2 = \id$ and $\eta(\omega) = \omega\inv$ and if $\eta(a) = a$
and $\eta(b) = b$, then 
$\big(\frac{a,b}K\big)_\omega$ has a unitary involution $\tau$ satisfying 
$\tau(c i^k j ^\ell) = j^\ell i^k \eta (c)$ for all $c\in K$, $k, \ell \in \Z$.   
 
\begin{example} Let $r_1, \ldots, r_m$ be integers with each $r_i \ge 2$, and let 
$s = \lcm(r_1, \ldots, r_m)$ and $n = r_1 \ldots r_m$.
Let $L$ be a field containing a primitive $s$-th root of unity $\omega$, and suppose 
$L$ has an automorphism $\eta$ of order $2$ such that $\eta(\omega) = \omega\inv$.
(For example, take $L = \C$ and $\eta$ to be complex conjugation.)
Let $L = K(x_1, \ldots, x_{2m})$, a rational function field over $L$.
For  $k = 1,2, \ldots, m$, let $\omega_k = \omega^{s/r_k}$, which is a primitive $r_k$-th
root of unity in $L$.  Let 
$$
\textstyle
Q \ = \ \big(\frac{x_1,x_2}K\big)_{\omega_1} \otimes _K \ldots \otimes_K
\big(\frac{x_{2m-1},x_{2m}}K\big)_{\omega_m}.
$$
So, $Q$ is a division algebra over $K$ of exponent $s$ and index $n$.
Extend $\eta$ to an automorphism of order~$2$ of~$K$  by setting $\eta(x_\ell)
= x_\ell$ for $\ell = 1, 2, \ldots, 2m$. Each symbol algebra 
$\big(\frac{x_{2k-1},x_{2k}}K\big)_{\omega_k}$ has a unitary involution $\tau_k$ 
as described above, with $\tau_k |_K = \eta$.  Let 
$\tau = \tau_1 \otimes \ldots \otimes \tau_m\colon Q \to Q$.  Since the $\tau_k$
all agree on $K$, this $\tau$ is a well-defined unitary involution on $Q$.
Let $\mu_\ell(L)$ denote the group of all $\ell$-th roots of unity in $L$.
Then, 
\begin{align}\label{TR}
\begin{split}
\SK(Q, \tau) \, &\cong 
\,\{c\in L^*\mid \eta(c^n) = c^n\} \,\big/\, 
\{ c\in L^*\mid\eta(c^s) = c^s\}   
\\
&\cong \,\{\xi \in \mu_n(L)\mid \eta(\xi) = \xi\inv\} \big/\mu_s(L).
\end{split}
\end{align}
For, let $\grZ = L[x_1, x_1\inv, \ldots, x_{2m}, x_{2m}\inv]$, the (commutative)
iterated Laurent polynomial ring over $L$, with its usual multigrading in  
$x_1, \ldots, x_{2m}$;
that is, $\Ga_\grZ = \Z^{2m}$ with 
$\grZ_{(k_1,\ldots ,k_{2m})} = Lx_1^{k_1}\ldots x_{2m}^{k_{2m}}$. 
 Then, $\grZ$~is a graded field.  Let $\grE$ be the tensor product of graded
symbol algebras, ${\grE = \big(\frac{x_1,x_2}\grZ\big)_{\omega_1} \otimes _\grZ 
\ldots \otimes_\grZ
\big(\frac{x_{2m-1},x_{2m}}\grZ\big)_{\omega_m}}$.  
Then, the grading on $\grZ$ extends to a grading on $\grE$, with 
$\deg(i_k) = \frac 1{r_k}\deg(x_{2k-1})$ and ${\deg(j_k) = 
\frac 1{r_k}\deg(x_{2k})}$, where $i_k$ and $j_k$ are the standard generators
for $\big(\frac{x_{2k-1},x_{2k}}\grZ\big)$.
Then, $\grE$ is a graded division algebra with center $\grZ$,
 and $\grE$ is totally ramified over $\grZ$, i.e., 
$\grE_0 = L = \grZ_0$. Clearly $q(\grZ) = K$ and  $q(\grE) = \grE 
\otimes_\grZ q(\grZ) = Q$,
 and $\tau$ restricts to a unitary graded  involution on $\grE$. Thus,
 $\SK(Q, \tau)\cong \SK(\grE, \tau)$ by Th.~\ref{th:main} , and  formula \eqref{TR} follows from 
the corresponding formula for $\SK(\grE, \tau)$ given in \cite[Th. 5.1]{hazw2},
as $\grE$ is totally ramified over $\grZ$ (i.e., $\grE_0 = \grZ_0$) and $\grZ$
is unramified over $\grZ^\tau$. 
\end{example}

Let $F \subseteq K \subseteq N$ be fields with $N$ Galois over $K$, 
$K$ Galois over $F$, and $\DIM KF = 2$.  Let $\Br(K)$ denote the 
Brauer group of $K$, and $\Br(N/K)= \ker\big(\Br(K) \to \Br(N)\big)$, the relative
Brauer group.  Let $\Br(N/K;F) = \{ [A] \in \Br(N/K) \mid \cor_{K \to F}[A] = 1\}$, 
where $\cor_{K \to F}$ is the corestriction mapping $\Br(K)$ to  $\Br(F)$.  
Recall that the theorem of Albert-Riehm says that $\Br(N/K;F)$ consists of the 
classes of central simple $K$ -algebras $A$ such that $N$ splits $A$ and 
$A$ has a unitary involution $\tau$ such that $K^\tau = F$ (see \cite[Th.~3.1, p.~ 31]{BoI}).
Suppose $N$ is a cyclic Galois extension of $K$ with 
$\DIM NK = n$ and $\Gal(N/K) = \langle 
\sigma\rangle$.  For $b\in K^*$, let $(N/K,\sigma,b)$ denote the 
cyclic algebra 
$$
(N/K, \sigma, b) \, = \, \textstyle \bigoplus\limits_{i = 0}^{n-1}
Ny^i, \quad \text{where \ $yc = \sigma(c) y$  \,for all $c \in K$  \ and \  $y^n = b$}. 
$$
Let $\eta$ be the nonidentity $F$-automorphism of $K$.  Suppose further that 
$N$ is Galois over $F$ and 
$N/F$ is {\it dihedral}. That is, suppose  there
exists $\rho \in \Gal(N/F)$ with $\rho|_K = \eta$, $\rho^2 = \id_N$, and 
$\rho \sigma \rho \inv = \sigma\inv$.  So, $\Gal(N/F) = \langle\sigma, \rho\rangle$,
and this group is dihedral if $n \ge 3$.  Observe that when $N/F$ is dihedral, if 
$b \in F^*$, then $(N/K,\sigma, b)$ has a unitary involution $\tau$ given by 
$\tau(cy^i) = y^i\rho(c)$ for all $c\in K$, $i\in \Z$.  Note that 
$\tau|_K = \eta$, so $K^\tau = F$ and $[(N/K, \sigma, b)] \in \Br(N/K;F)$.

\begin{example}
Let $F \subseteq L$ be fields with $\DIM LF = 2$ and $L$ Galois over $F$ with $\Gal(L/F)
= \{ \id_L, \eta\}$.  Let $N_1$ and $N_2$ be cyclic Galois extensions of $L$
which are linearly disjoint over $L$ with each $N_i$ dihedral over $F$ as just
described.  Let $n_j = \DIM{N_j}L$ and let  $\Gal(N_j/L) = \langle \sigma_j\rangle$; 
 extend each $\sigma_j$ 
to $N_1 N_2$ so that $\sigma_1|_{N_2} = \id_{N_2}$ and $\sigma_2|_{N_1} = \id_{N_1}$. 
Let $K = L(x_1, x_2)$, a rational function field over $L$, and extend
$\eta$ to $K$ by $\eta(x_i) = x_i$ for $i = 1, 2$ and likewise extend the 
$\sigma_j$ to $N_1N_2K$ by $\sigma_j(x_i) = x_i$ for $j = 1,2$, $i = 1,2$.
Let 
$$
Q\, = \, (N_1K/K, \sigma_1, x_1) \otimes_K (N_2K/K, \sigma_2, x_2),
$$
which is a division algebra over $K$ with exponent $\lcm(n_1, n_2)$ and degree
$n_1n_2$.  As noted above,  each $(N_jK/K, \sigma_j, x_j)$
has a unitary involution $\tau_j$ 
 with $\tau_j|_K = \eta$.  Let $\tau = 
\tau_1 \otimes \tau_2$, which is a unitary involution on $Q$.  Then,
\begin{equation}\label{semiram}
\SK(Q, \tau) \, \cong\, \Br(N_1N_2/L;F) \,\big/ [\Br(N_1/L;F) \cdot\Br(N_2/L;F)].
\end{equation}
It was shown in \cite{yinverse} that the right expression in \eqref{semiram} can be 
made into any finite abelian group by choosing $L$ to be an algebraic number field 
and suitably choosing $N_1$ and $N_2$. To view $Q$ as a ring of quotients,
first take $\grZ = L[x_1,x_1\inv, x_2, x_2\inv] \subseteq K$, so $\grZ$ is a
commutative twice-iterated Laurent polynomial ring over $L$, and we give $\grZ$
its usual grading by multi-degree in $x_1$ and $x_2$, as in the preceding 
example.  Let $\grE = N_1N_2[y_1,y_1\inv, y_2, y_2\inv] \subseteq Q$, where
$y_1$ and $y_2$ are the standard generators of the symbol algebras of~$Q$.  
This $\grE$ is a twisted iterated Laurent polynomial ring with $y_1^{n_1} = x_1$,
$y_2^{n_2} = x_2$, $y_1y_2 = y_2 y_1$ and for all $c\in N_1 N_2$,  
$y_1c = \sigma_1(c) y_1$ and $y_2 c = \sigma_2(c)y_2$.  We extend the grading on  
$\grZ = Z(\grE)$ to $\grE$ by setting $\deg(y_1) = (\frac 1{n_1}, 0)$ and
$\deg(y_2) = (0, \frac 1{n_2})$; so,  ${\Gamma_\grE
=\frac1{n_1}\Z \times \frac1{n_2}\Z}$.
  We can see that $\grE$ is a graded division algebra by noting
that $\grE_0 = N_1N_2$, a field, and each homogeneous component $\grE_\ga$ of $\grE$ is a
 $1$-dimensional $\grE_0$-vector space containing a unit of $\grE$.
 This $\grE$~is semiramified
since ${\DIM{\grE_0}{\grZ_0} = \DIM{N_1N_2}{L}= n_1n_2 = \IND{\Ga_\grE}{\Ga_\grZ}}$;
indeed, it is decomposably semiramified in the terminology of \cite{W}, since
$\grE = (N_1\grZ/\grZ, \sigma_1, x_1)\otimes_\grZ (N_2\grZ/\grZ,\sigma_2,x_2)$
which expreses $\grE$ as a tensor product of semiramified graded 
cyclic algebras. Since $\tau$ on $Q$ clearly restricts to a unitary graded involution 
on~ $\grE$ (recall that $\tau(y_1) = y_1$ and $\tau(y_2) = y_2$), Th.~\eqref{th:main}
shows that $\SK(Q, \tau) \cong \SK(\grE, \tau)$.  But further, 
let ${K' = L((x_1))((x_2))}$, a twice-iterated Laurent power series field over $L$, 
and let $D = (N_1K'/K', \sigma_1, x_1) \otimes _{K'} (N_2K'/K', \sigma_2, x_2)$, 
which is a central simple division algebra over $K'$.  Then, the standard
rank $2$ Henselian valuation $v$ on $K'$ has associated graded ring
$\gr(K') = \grZ$, and for the unique extension of $v$ to $D$ we have
$\gr(D) = \grE$.  Because each $N_jK'$ is dihedral over $F((x_1))((x_2))$,
there is a unitary involution $\widehat \tau$ on $D$ built just as for 
$\tau$ on $Q$.  This $\widehat \tau$ is compatible with the valuation
on $D$, and the involution on $\grE$ induced by $\widehat \tau$ is clearly~
$\tau$.  Thus, $\SK(E, \tau) \cong \SK(D, \widehat \tau)$ by~  \eqref{assocgraded} 
above.  But $\SK(D, \widehat \tau)$ was computed in \cite[Th.~5.6]{y}
(with another proof given in \cite[Th.~7.1(ii)]{W}), and the 
formula given there combined with the isomorphisms stated here yield \eqref{semiram}.

\end{example}


\begin{thebibliography}{KMRT}

\medskip

\bibitem[J]{J} N. Jacobson, {\it Finite-dmensional division algebras
over fields}, Springer-Verlag, Berlin, 1996.

\medskip


\bibitem[HaW$_1$]{hazw1} R. Hazrat, A. R. Wadsworth,
{SK$_1$ of graded division algebras}, {\it Israel J. Math.}, to appear,
preprint available  (No.~318) at:
{\tt http://www.math.uni-bielefeld.de/LAG/}~.

\medskip

\bibitem[HaW$_2$]{hazw2} R. Hazrat, A. R. Wadsworth, 
Unitary SK$_1$ of graded and valued division algebras,
{\it Proc. London Math. Soc.}, to appear, 
preprint available (No.~369) at:
{\tt http://www.math.uni-bielefeld.de/LAG/}~.


\medskip

\bibitem[HwW]{HwW2}
Y.-S. Hwang and A. R. Wadsworth, Correspondences between valued
division algebras and graded division algebras, \textit{J. Algebra},
\textbf{220} (1999), 73--114.

\medskip

\bibitem[KMRT]{BoI}
M.-A. Knus, A. S. Merkurjev, M. Rost, J.-P. Tignol, \textit{The Book of
     Involutions}, Amer. Math. Soc. Coll. Pub. 44, Providence, RI, 1998.

\medskip


\bibitem[W]{W} A. R. Wadsworth, Unitary $\SK$ of semiramified graded and valued 
division algebras, preprint available at arXiv:1009.3904.

\medskip


\bibitem[Y$_1$]{yin} V. I. Yanchevski\u\i,
{Simple algebras with involution, and unitary groups},
{\it Mat. Sb.} (N.S.), 1974, {\bf 93 (135)}, 3, 368--380, (in Russian);
English transl., {\it  Math. of the USSR-Sbornik},  {\bf 22:3} (1974), 372--385.

\medskip

\bibitem[Y$_2$]{y} V. I. Yanchevski\u\i,
{Reduced unitary $K$-Theory and
division rings over discretely valued Hensel fields},
{\it Izv. Akad. Nauk SSSR Ser. Mat.}, {\bf 42} (1978), 879--918
(in Russian);
English transl.,
{\it Math. USSR Izvestiya}, {\bf 13} (1979),
175--213.


\medskip

\bibitem[Y$_3$]{y1var} V. I. Yanchevski\u\i,
Reduced Whitehead unitary groups of skew-fields 
of noncommutative rational functions, 
Rings and modules, 2.
{\it Zap. Nauchn. Sem. Leningrad. Otdel. Mat. Inst.
 Steklov. $($LOMI$)$}, {\bf 94} (1979), 142--148, 152
(in Russian); English transl: {\it J. Soviet Math.}, 
{\bf 19} (1982), 1067--1071. 

\medskip

\bibitem[Y$_4$]{yinverse} V. I. Yanchevski\u\i,
{The inverse problem of reduced $K$-theory},
{\it Mat. Zametki}, {\bf 26} (1979), 475--482 (in Russian);
English transl., {A converse problem in reduced unitary $K$-theory},
{\it Math. Notes}, {\bf 26} (1979), 728--731.

\medskip 

\bibitem[Y$_5$]{ynvar} V. I. Yanchevski\u\i,
Reduced unitary Whitehead groups and noncommutative 
rational functions, {\it Dokl. Akad. Nauk BSSR}, 
{\bf 24} (1980),  588--591, 667
(in Russian); English transl: {\it Amer. Math. 
Soc. Transl.} (2), {\bf  154}
(1992), 63--67. 

\medskip

\bibitem[Y$_6$]{yy} V. I. Yanchevski\u\i,
{Reduced unitary K-theory. Applications to algebraic groups.}
{\it Mat. Sb.} (N.S.),  {\bf 110  (152)} (1979),  579--596 (in Russian);
English transl.,  Math. USSR Sbornik. {\bf 38} (1981) 533--548.




\end{thebibliography}
\end{document}